\def\R{\mathbb{R}}
\def\cI{\mathcal{I}}
\def\cK{\mathcal{K}}
\def\cN{\mathcal{N}}
\def\cO{\mathcal{O}}
\def\cS{\mathcal{S}}
\def\cT{\mathcal{T}}
\def\a{\alpha}
\def\b{\beta}
\def\d{\delta}
\def\p{\partial}
\def\vphi{\varphi}
\def\O{\Omega}
\def\transp{{\sf T}}
\def\hu{\widehat{u}}
\def\tu{\widetilde{u}}
\newcommand{\dv}[1]{\,{\mathrm d}#1}
\newcommand{\wcheck}[1]{#1\hspace{-.8ex}\mbox{\huge {\lower.45ex \hbox{$\textstyle \check{}$}}} \hspace{.5ex}}
\let\oldmarginpar\marginpar
\renewcommand\marginpar[1]{
  \oldmarginpar[\raggedleft\footnotesize #1]
  {\raggedright\footnotesize #1}}
\newtheorem{definition}{Definition}
\newtheorem{lemma}[definition]{Lemma}
\newtheorem{theorem}[definition]{Theorem}
\newtheorem{remark}[definition]{Remark}
\numberwithin{definition}{section}
\definecolor{modmag}{RGB}{179,0,229}
\def\tu{\widetilde{u}}
\renewcommand{\hat}{\widehat}
\renewcommand{\d}{\textnormal{d}}
\def\av{{\rm av}}
\begin{document}
\title[Error analysis for harmonic map heat flow]{Error analysis
for the numerical approximation of the harmonic map heat flow with
nodal constraints}
\author{S\"oren Bartels}
\address{Abteilung f\"ur Angewandte Mathematik,  
Albert-Ludwigs-Universit\"at Freiburg, Hermann-Herder-Str.~10, 
79104 Freiburg i.~Br., Germany}
\email{bartels@mathematik.uni-freiburg.de}
\author{Bal\'{a}zs Kov\'{a}cs}
\address{Faculty of Mathematics, Technical University of Munich,
	Boltzmannstr.~3, 85748 Garching bei M\"unchen, Germany, 
	And 
        Faculty of Mathematics,  University of Regensburg, 
	Universit{\"a}tsstr.~31, 93040 Regensburg, Germany}
\email{balazs.kovacs@mathematik.uni-regensburg.de}
\author{Zhangxian Wang}
\address{Abteilung f\"ur Angewandte Mathematik,  
Albert-Ludwigs-Universit\"at Freiburg, Hermann-Herder-Str.~10, 
79104 Freiburg i.~Br., Germany}
\email{zhangxian.wang@mathematik.uni-freiburg.de}
\date{\today}
\renewcommand{\subjclassname}{
\textup{2010} Mathematics Subject Classification}
\subjclass[2010]{65M12, 65M15, 65M60, 35K61}
\begin{abstract}
An error estimate for a canonical discretization of the 
harmonic map heat flow into spheres is derived. The numerical
scheme uses standard finite elements with a nodal treatment
of linearized unit-length constraints. The analysis is based
on elementary approximation results and only uses the discrete
weak formulation. 
\end{abstract}
\keywords{Harmonic maps, heat flow, finite elements, constraints, error analysis}

\maketitle

\section{Introduction}
The harmonic map heat flow into spheres is obtained as the $L^2$ gradient
flow of the Dirichlet energy on vector fields satisfying a pointwise unit 
length condition, i.e., for the functional
\[
E(u) = \frac12 \int_\O |\nabla u|^2 \dv{x}, 
\quad u\in H^1(\O,\R^m), \quad |u|^2= 1.
\]
Given initial data $u^0\in H^1(\O,\R^m)$ with $|u^0(x)|^2 =1$ for almost
every $x\in \O$ the evolution problem reads in strong form
\begin{equation}\label{hhf_strong}
\p_t u - \Delta u = |\nabla u|^2 u, \quad u(0) = u^0, \quad |u|^2 = 1,\quad
\nabla u|_{\p\O} \cdot n = 0.
\end{equation}
The problem admits possibly non-unique weak solutions which satisfy the
energy decay property
\begin{equation}\label{eq:enerlaw}
E(u(t)) + \int_0^t \|\p_t u\|^2 \dv{s} \le E(u^0),
\end{equation}
and the weak formulation of the evolution problem which is given by 
\begin{equation}\label{hhf}
(\p_t u,\phi) + (\nabla u,\nabla \phi) = 0, \quad 
\end{equation}
for all $t\in [0,T]$ with test functions $\phi \in H^1(\O,\R^m)$
satisfying the orthogonality condition
\[
\phi(x) \cdot u(t,x) = 0,
\]
which arises as a linearization of the unit-length constraint. The time
derivative $\p_t u$ satisfies this condition, i.e., we
have 
\[
\p_t u (t,x) \cdot u(t,x) = 0.
\]
The well-posedness of the problem and properties of solutions have been
investigated in, e.g.,~\cite{EellSamp64,ChenStru89}.
For the developement of numerical methods it is attractive to 
exploit the elementary
fact that the orthogonality implies the preservation of the unit-length
constraint, i.e., the identity $\p_t |u|^2 = 2 \p_t u \cdot u$  or equivalently,
\begin{equation}\label{constr_pres}
|u(t,x)|^2 - |u^0(t,x)|^2  = 2 \int_0^t \p_t u(s,x) \cdot u(s,x) \dv{s},
\end{equation}
yields that $|u|^2= 1$ almost everywhere provided that the initial data has this
property and the pointwise orthogonality $\p_t u \cdot u = 0$ is fulfilled. 
An important observation for the derivation of error estimates is
that regular solutions are unique and that a local stability result holds.
Instead of deriving such a result from the full Euler--Lagrange
equations~\eqref{hhf_strong}
we follow the novel approach from~\cite{AFKL2021} and
consider the tangent space formulation 
\[
\p_t u = P(u) \Delta u 
\]
with the tangential projection $P(s) = I -  s s^\transp$ at $s\in \R^m$.
If $u_*$ is an approximate solution 
satisfying $|u_*|^2 =1$ almost everywhere, we
define its defect $d_*$ and residual $r$ via 
\[
\p_t u_* = P(u_*) \Delta u_* + d_* = P(u) \Delta u_* + r,
\]
where $r= d_* - (P(u)-P(u_*))\Delta u_*$. This relation allows us
to compare the equations and obtain an evolution equation for the 
error $e= u-u_*$, i.e., 
\[
\p_t e = P(u) \Delta e -r,
\]
or in weak form
\[
(\p_t e, \phi) + (\nabla e, \nabla \phi) = -(r,\phi)
\]
for all $\phi \in H^1(\O,\R^m)$ with $\phi \cdot u(t,\cdot) = 0$. The
function $\p_te $ is an attractive test function to obtain an error
estimate but may not be admissible. We thus consider its projection and
note that, using, e.g., $P(u)\p_t u = \p_t u$, 
\[
\phi = P(u) \p_t e = P(u) \p_t u - P(u) \p_t u_*  = \p_t e - q
\]
with $q = - (P(u) - P(u_*)) \p_t u_*$.  This implies that
\[
\|\p_t e\|^2 + (\nabla e, \nabla \p_t e) = -(r, \p_t e + q ) 
- (\p_t e,q) - (\nabla e, \nabla q).
\] 
Local Lipschitz estimates for the projection operator, cf. 
formula~\eqref{lemma:lip_p} below, yield that
\[
\|q\|_{H^k} \le c_{a,k} \|e\|_{H^k}, \quad \|r\|\le c_{b,0} \|e\| + \|d_*\|,
\]
where the constants $c_{a,k}$, $k=1,2$, and $c_{b,0}$ depend on higher
order norms of $u_*$. We thus arrive at 
\[
\|\p_t e\|^2 + \frac{\d}{\d t} \|\nabla e\|^2 \le c_1 \|e\|_{H^1}^2 + c_2 \|d_*\|^2.
\]
Using that $\frac{\d}{\d t} \|e\|^2\le \|\p_t e\|^2 + \|e\|^2$
a Gronwall argument proves  
\[
\sup _{t\in[0,T]}\|e(t)\|^2_{H^1} \le  \Big(\|e(0)\|_{H^1}^2 + c_2 \int_0^T \|d_*\|^2 \dv{t} \Big)
\exp\big((c_1+1) T\big).
\]
In a semi- or fully discrete setting the approximations $u_*$ 
are suitable interpolants of a sufficiently regular exact solution 
of~\eqref{hhf} and $u$ is replaced by the solution of 
the numerical scheme. 

As an example we consider the semi-discrete scheme which, following
an idea by Alouges~\cite{Alou97},  determines for
a step size $\tau>0$ the sequence $(u^n)_{n=0,\dots,N} \subset H^1(\O,\R^m)$
via computing for given $u^{n-1}$ the function $d_t u^n \in H^1(\O,\R^m)$ 
satisfying $d_t u^n \cdot u^{n-1} = 0$ in $\O$ and the linear system 
\[
(d_t u^n,\phi) + (\nabla u^n,\nabla \phi) = 0  
\]
for all $\phi\in H^1(\O,\R^m)$ satisfying $\phi \cdot u^{n-1} = 0$ in $\O$. 
The new approximation $u^n$ is given by $u^n= u^{n-1}+\tau d_t u^n$, in particular
$d_t u^n$ is the backward difference quotient. By choosing 
$\phi = d_t u^n$ we find the unconditional energy stability
\[
\frac12 \|\nabla u^{N'}\|^2 + \tau \sum_{n=1}^{N'} \|d_t u^n\|^2 \le \frac12 \|\nabla u^0\|^2.
\]
for all $N'=1,2,\dots, N$. Furthermore, as observed in~\cite{Bart16}
the orthogonalities lead to the discrete version of relation~\eqref{constr_pres} given
by
\[
|u^n|^2 = |u^{n-1}|^2 + \tau^2 |d_t u^{n-1}|^2 = \dots = 1 + \tau^2 \sum_{j=1}^n |d_t u^j|^2,
\]
so that $|u^n|^2 \ge 1$ and $\||u^n|^2-1\|_{L^1} \le (\tau/2) \|\nabla u^0\|^2$, i.e.,
the constraint-violation is of order $O(\tau)$. The pointwise normalization of $u^n$,
given by 
\[
\tu^n_{{\rm nor}} = \frac{u^n}{|u^n|},
\] 
is well defined and energy-decreasing which motivates considering $\tu^n_{{\rm nor}}$ as
the new approximation. 
The energy-decreasing property of the normalization is however in general not 
satisfied in fully discrete settings, cf.~\cite{Bart05},
and therefore omitted. Moreover, including
the projection in the scheme makes the numerical analysis more complicated as,
e.g., $d_t u^n$ is not the backward difference quotient anymore if $u^n$ is replaced
by $\tu^n_{{\rm nor}}$. Nevertheless, our analysis shows that $\tu^n_{{\rm nor}}$ approximates
the exact solution $u(t_n)$ with the same order as $u^n$ which justifies the normalization
as a postprocessing procedure. 

With the auxiliary variable $\hu^u = u^{n-1}/|u^{n-1}|$
the iterates of the semi-discrete scheme satisfy 
\[
d_t u^n = P(\hat{u}^n)\Delta u^n,
\]
which leads to defining the defects $d^n$ of the time-step 
evaluations $u_*^n = u(t_n)$, 
$t_n = n\tau$, via 
\[
d^n = P(\hat{u}_*^n)(d_t u_*^n - \Delta u_*^n).
\]
Since $P(u_*^n)(\p_t u(t_n) - \Delta u_*^n ) = 0$ we find that
\[
d^n =P(\hat{u}_*^n) \big(d_t u_*^n - \p_t u(t_n)\big) + 
\big( P(\hat{u}_*^n) - P(u_*^n)\big)(\p_t u(t_n) - \Delta u_*^n),
\]
and hence, with $I_n = [t_{n-1},t_n]$, for $u$ sufficiently regular,
using a local Lipschitz estimate for $P$, 
\[\begin{split}
\|d^n\| &\le \|d_t u_*^n - \p_t u(t_n)\| + c_u \|\hat{u}_*^n - u_*^n\| \|\p_t u(t_n) - \Delta u(t_n)\|_{L^\infty}\\
&\le \tau \| \p_t^2 u\|_{C^0(I_n,L^2)} + c_u \tau \|\p_t u\|_{C^0(I_n,L^2)} \|\p_t u - \Delta u\|_{C^0(I_n,L^\infty)},
\end{split}\]
and thus 
\[
\tau\sum_{k=1}^n \|d^k\|^2 \le c \tau^{2} \|\p_t^2 u\|^2_{L^2([0,T],L^2)}.
\]
Using the strategy of the continuous perturbation result described above,
we obtain the error estimate
\[
\max_{n=0,\dots,N} \|u(t_n) - u^n\|_{H^1} \le c_{{\rm sd}} \tau,
\]
provided that $u$ is sufficiently regular. 

The fully discrete numerical scheme analyzed in~\cite{AFKL2021} imposes
the orthogonality condition in an averaged sense, i.e., via
$\Pi_h (\hu_h^n \cdot \phi_h) = 0$ with the $L^2$ projection $\Pi_h$ onto
the underlying scalar finite element space. This definition of a discrete
tangent space gives rise to a selfadjoint projection operator with
suitable stability and approximation properties. Practically more efficient,
in particular in view of the development of efficient iterative solvers~\cite{KPPRS19}
and the generalization of the methods to pointwise constraints in mechanical
applications~\cite{Bart15-book,BarRei21}, a nodewise variant of orthogonality appears attractive.
In~\cite{AFKL2021} it is argued that this can be analyzed by considering
a suitable correction term. Here, we aim at a direct numerical analysis
for the nodal variant of the constraint. We avoid the use of mass lumping
which would provide a selfadjoint projection operator but would restrict
the analysis to lowest order methods. 
Our results follow from basic estimates for nodal interpolation with
and $H^1$ projection onto finite element spaces working only within the discrete 
weak formulation. 

The finite element scheme computes iterates $(u_h^n)_{n=0,\dots,N}$ in a finite
element space $V_h$ and imposes the orthogonality conditions in the nodes of
the underlying element. Hence, for given 
$\hu\in C(\overline{\O};\R^m)$ we consider the discrete tangent space
\begin{equation}\label{eq:discrete tangent space}
T_h(\hu) = \big\{ \phi_h \in V_h: \cI_h(\phi_h \cdot \hu) = 0 \big\},
\end{equation}
where $\cI_h$ is the nodal interpolation operator associated with $V_h$. 
Note that $T_h(\hu)$ only depends on the directions of the nodal values of $\hu$. 
Given an initial value $u_h^0 \in V_h$ we compute the sequence $(u_h^n)_{n=0,\dots,N}\in V_h$
by successively computing discrete time derivatives
$d_t u_h^n \in T_h (\hu_h^n)$ such that
\begin{equation}
\label{full_discr}
(d_t u_h^n,\phi_h) + (\nabla u_h^n,\nabla \phi_h) = 0 
\end{equation}
for all $\phi_h\in T_h(\hu_h^n)$, where $\hu_h^n = u_h^{n-1}/|u_h^{n-1}|$. 
The error analysis becomes substantially more involved as, e.g., boundedness
of the iterates away from zero has to be guaranteed. 
Our main result is the following variant of the results from~\cite{AFKL2021}.

\begin{theorem}[Error estimate] \label{thm:error estiamate}
Assume that $\|u_h^0-u^0\|_{H^1} \le c h$ and 
let $(u_h^n)_{n = 0,...,N}$ be the continuous, piecewise linear 
finite element approximations obtained by \eqref{full_discr} 
on a family of regular and quasi-uniform triangulations of $\O$
with mesh-sizes $h>0$. 
Suppose that the exact solution $u$ of the harmonic map heat 
flow \eqref{hhf_strong} satisfies  
\begin{equation}\label{eq:reg_cond}
\begin{split}
u \in & C^2([0,T],H^1(\O)) \cap C^1([0,T],H^2(\O)  \cap W^{1,\infty}(\O)) \\
& \qquad \cap C^0([0,T],W^{2,\infty}(\O)).
\end{split}
\end{equation}
Then, for $h,\tau>0$ sufficiently small we have that 
\[
\max_{n=0,\dots,N} \|u_h^n - u(t_n)\|_{H^1} \leq c_{{\rm fd}} ( \tau + h ),
\]
provided that $\tau\le c_{{\rm m}} h^{1/2}$ with $c_{{\rm m}}>0$ sufficiently small. 
\end{theorem}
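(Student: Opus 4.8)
The plan is to mimic the continuous perturbation argument sketched in the introduction, but now at the fully discrete level, using $u_*^n = \cI_h u(t_n)$ (or a suitable $H^1$-projection of $u(t_n)$) as the comparison function and controlling the defects that arise from interpolation, time discretization, and nodal enforcement of the linearized constraint. The first step is to set up the error equation: writing $e_h^n = u_h^n - u_*^n$, one inserts $u_*^n$ into \eqref{full_discr}, producing a defect $d_h^n$ measured against the discrete tangent space $T_h(\hu_h^n)$. Unlike the semidiscrete case, the test function $d_t e_h^n$ is in general not admissible because it need not lie in $T_h(\hu_h^n)$; following the introduction one replaces it by its "projection" onto the discrete tangent space and estimates the correction $q_h^n$ via the local Lipschitz bound \eqref{lemma:lip_p} for $P$ together with an inverse estimate to absorb the nodal-interpolation error of $\cI_h(\phi_h\cdot\hu_h^n)$. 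This is where the coupling condition $\tau \le c_{\rm m} h^{1/2}$ enters: the inverse-estimate factors $h^{-1}$ must be compensated by $\tau$ and by the smallness of $\|e_h^{n-1}\|_{H^1}$, which is only available inductively.

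The second step is the key structural estimate
\[
\|d_t e_h^n\|^2 + d_t \|\nabla e_h^n\|^2 \le c_1 \|e_h^n\|_{H^1}^2 + c_1 \|e_h^{n-1}\|_{H^1}^2 + c_2 (\|d_h^n\|^2 + h^2 + \tau^2),
\]
obtained by testing with the admissible projected difference quotient, using $d_t\|\nabla e_h^n\|^2 \le 2(\nabla e_h^n, \nabla d_t e_h^n)$ (the discrete product rule, with a harmless $-\tau\|\nabla d_t e_h^n\|^2$ term that one keeps on the good side), and bounding each right-hand term by the local Lipschitz estimates for $P$, the approximation properties of $\cI_h$ on the regular quasi-uniform mesh, and the regularity \eqref{eq:reg_cond}. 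Here one must also verify that $\hu_h^n = u_h^{n-1}/|u_h^{n-1}|$ is well defined, i.e. that $|u_h^{n-1}|$ is bounded below; this follows because $u_*^{n-1}$ has unit length up to $O(h)$ nodal interpolation error and, inductively, $\|e_h^{n-1}\|_{L^\infty} \lesssim h^{-d/2}\|e_h^{n-1}\|_{L^2}$ is small once the $H^1$-error is $O(h+\tau)$ and $\tau \le c_{\rm m}h^{1/2}$. The defect bound $\tau\sum_{k\le n}\|d_h^k\|^2 \le c(\tau^2+h^2)$ combines the semidiscrete time-defect estimate already recorded in the introduction with the spatial interpolation defect, for which one uses the $W^{2,\infty}$-regularity of $u$ and standard interpolation error bounds, plus the $O(h)$ discrepancy between the averaged ($\Pi_h$) and nodal ($\cI_h$) enforcement of orthogonality.

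The third step is a discrete Gronwall argument. Summing the per-step inequality over $n$, using $d_t\|e_h^n\|^2 \le \|d_t e_h^n\|^2 + \|e_h^n\|^2$ to upgrade the $L^2$-control into $H^1$-control, and absorbing the $c_1\tau\|e_h^n\|_{H^1}^2$ term on the left for $\tau$ small, one arrives at
\[
\max_{n\le N}\|e_h^n\|_{H^1}^2 \le C\Big(\|e_h^0\|_{H^1}^2 + \tau\!\sum_{k=1}^N\|d_h^k\|^2 + T(h^2+\tau^2)\Big)\exp(C T) \le C(h+\tau)^2,
\]
using the hypothesis $\|u_h^0 - u^0\|_{H^1}\le ch$ and $\|u^0 - u_*^0\|_{H^1}\le ch$. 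A triangle inequality with the interpolation error $\|u_*^n - u(t_n)\|_{H^1}\le ch$ gives the claimed bound. The remaining subtlety is that the Gronwall constants and the smallness thresholds for $h,\tau$ depend on the a-priori $L^\infty$-smallness of $e_h^{n-1}$, so the argument is genuinely inductive: one assumes $\|e_h^{k}\|_{H^1}\le K(h+\tau)$ for $k<n$ with a fixed $K$, runs the one-step estimate, and checks that the resulting bound at step $n$ is again $\le K(h+\tau)$ provided $h$ is small and $\tau\le c_{\rm m}h^{1/2}$; closing this bootstrap — in particular tracking that the inverse-estimate losses are always beaten by the gained powers of $h$ and $\tau$ — is the main obstacle, and it is exactly the reason the parabolic-type coupling $\tau\lesssim h^{1/2}$ appears rather than the milder $\tau\lesssim h$ one might naively expect.
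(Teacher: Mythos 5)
Your overall strategy matches the paper's: set up the error equation with a comparison function in $V_h$, test with the projected difference quotient $P_h(\hu_h^n)\,d_t e_h^n$, control the correction and residual terms by local Lipschitz and approximation estimates for the (discrete) tangential projection, obtain $L^\infty$-control of the error via an inverse estimate, and close a discrete Gronwall argument by induction. The step-size condition $\tau\lesssim h^{1/2}$ indeed arises from the inverse estimate $\|e_h^{n-1}\|_{L^\infty}\le c\,h^{-1/2}\|e_h^{n-1}\|_{H^1}$, which, combined with the targeted bound $\|e_h^{n-1}\|_{H^1}\lesssim h+\tau$, is what keeps $|u_h^{n-1}|$ pinned near one and makes the normalization $\hu_h^n=N(u_h^{n-1})$ well defined; you identify this correctly at the end.

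Two points in your write-up need to be sharpened. First, the comparison function must be the mean-preserving Ritz projection $R_h u(t_n)$, not the nodal interpolant $\cI_h u(t_n)$ (your hedge ``or a suitable $H^1$-projection'' is not enough to make the argument go through as stated). With the Ritz projection, the Galerkin orthogonality $(\nabla R_h v,\nabla w_h)=(\nabla v,\nabla w_h)$ and the identity $(\nabla R_h v,\nabla w_h)=-(\Delta v,w_h)$ (for Neumann data) make the spatial consistency error enter only through zero-order terms, so the defect $d_h^n$ can be estimated in $L^2$: $\|d_h^n\|\le c(h+\tau)$. If you instead use $\cI_h u(t_n)$, the defect contains the term $(\nabla(\cI_h u^n - u^n),\nabla\phi_h)$, which is only $O(h)\,\|\phi_h\|_{H^1}$; you cannot close the argument with $\phi_h=d_h^n$ to get an $L^2$-bound for the defect without a further integration by parts or duality step, which your sketch does not provide. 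Second, your explanation of the defect bound invokes ``the $O(h)$ discrepancy between the averaged ($\Pi_h$) and nodal ($\cI_h$) enforcement of orthogonality''. That comparison plays no role here; the paper works purely nodally, and the relevant quantity is instead the approximation error between the nodal projection $P_h=\cI_h P$ and the continuous projection $P$, which is $O(h)$ in $H^1$ (Lemma~\ref{lemma:approx_ph}), together with Lipschitz estimates for $P_h$ (Lemma~\ref{lemma:lip_ph}). These supply exactly the $(h+\tau)$-sized consistency piece and the $\|e_h^{n-1}\|_{H^1}$-sized stability piece that feed the Gronwall step; your decomposition $s_h^n+q_h^n$ of the test-function correction should be attributed to this structure rather than to an averaged-vs-nodal gap.
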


The regularity condition can be weakened to
$u \in H^2([0,T],H^1(\O)) \cap H^1([0,T],H^2(\O)\cap W^{1,\infty}) \cap C^0([0,T],W^{2,\infty}(\O))$.
This requirement and~\eqref{eq:reg_cond} can in general only be expected locally or for
initial data with small initial energies. However, the optimal
convergence of the numerical scheme in the case of smooth solutions
underlines its efficiency and thereby complements weak convergence theories
under minimal regularity assumptions. 

Since our error analysis only uses elementary approximation results for
Lagrange finite element methods, it directly extends to higher-order
finite element methods of polynomial degree $r \ge 1$ with the convergence
rate $O(\tau + h^r)$ under suitable regularity conditions. 
Moreover, by combining our consistency and stability bounds with 
the multiplier techniques used in \cite{AFKL2021}, our error estimates extend
to discretizations using linearly implicit backward difference formulae up to 
order $k\le 5$ with the convergence rate $O(\tau^k+h^r)$
under suitable regularity conditions and in fact a weaker step-size condition.
A crucial modification arises in the stability estimate for the error equation 
which is proved using multiplier techniqe based energy estimates, cf., e.g., 
\cite[Appendix]{AFKL2021}. 

Various convergence theories for the numerical approximations obtained 
with variants of the iteration~\eqref{full_discr} have been established
for the harmonic map heat flow and the closely related Landau--Lifshitz--Gilbert
equations. Motivated by understanding the occurrence of singular solutions
as in~\cite{ChDiYe92},
the weak convergence of subsequences to weak solutions of the evolution
problems has been established in \cite{AlougesJaisson2006,BartelsProhl2006,BBFP07,BartelsProhl2007,Alouges2008,BartelsLubichProhl2009,BanasProhlSchaetzle2010,AKST2014,PRS18,MPPR22}.  
Error estimates with specific convergence rates have been proved under
suitable regularity conditions, cf.~\cite{Proh01-book,Cimrak2005,Gao2014,An2016}.
The method and estimates considered here are a variant of the arguments
given in~\cite{AFKL2021} using a nodal treatment of orthogonalities
but also avoiding a projection step. Recently, error estimates for 
schemes that include such a step to guarantee the unit length condition
in the nodes of a finite element space have been derived 
in~\cite{AnGaoSun2021,AnSun2021} and \cite{GuiLiWang2022}. While they also
lead to optimal error estimates they require more restrictive conditions and
the methods may not be convergent in the absence of a regular solution. A byproduct
of our analysis shows that a postprocessing of our numerical solutions leads
to approximations that obey the length constraint in the nodes and are 
quasi-optimal approximations of the exact solution.  

The outline of this article is as follows. We specify notation and 
state some preliminary results in Section~\ref{sec:prelim}. In Section~\ref{sec:fully_discr}
we devise the fully discrete numerical scheme and derive properties of a
projection operator related to the discrete tangent spaces. 
Section~\ref{sec:consistency} provides stability bounds in terms of
consistency terms. These
lead to the main error estimate derived in Section~\ref{sec:error_est}. 


\section{Preliminaries}\label{sec:prelim}
We collect in this section some elementary facts about the harmonic map heat
flow and numerical concepts for discretizing parabolic partial differential
equations. We use standard notation for function spaces but often omit 
domains and target spaces when this is clear from the context. We abbreviate
by $(\cdot,\cdot)$ and $\|\cdot\|$ the inner product and norm in $L^2(\O,\R^m)$.
Throughout the article a factor~$c$ denotes a constant that may depend on
regularity properties of an exact solution. 

\subsection{Harmonic map heat flow}
For a bounded domain $\O\subset \R^{n_\O}$, with $n_\O=1,2,3$, a time horizon $T>0$, 
and given initial data $u^0\in H^1(\O,\R^m)$ with $|u^0|^2=1$ almost everywhere
in $\O$ we say that  $u \in H^1((0,T),L^2(\O,\R^m) \cap L^\infty((0,T),H^1(\O,\R^m)$
is a weak solution of the harmonic map heat flow if $|u(t,x)|^2=1$  for almost
every $(t,x) \in [0,T]\times \O$, and $u(t,0) = u^0$, and 
\[
(\p_t u,\phi) + (\nabla u,\nabla \phi) = 0
\]
holds for almost every $t\in [0,T]$ and all $\phi\in H^1(\O,\R^m)$ with 
$\phi(x)\cdot u(t,x) =0$ for almost every $x\in \O$. 
Defining the tangential projection operator $P(s):\R^m\to \R^m$ for $s\in  \R^m$ via  
\[
P(s) = I - s s^\transp,
\]
we may state the the strong form~\eqref {hhf_strong}
of the evolution problem as 
\[
\p_t u = P(u) \Delta u, \quad |u|^2 = 1, \quad u(0,\cdot) = u^0, \quad \nabla u|_{\p\O} \cdot n = 0.
\]
For a function $\hu\in H^1(\O,\R^m)$ satisfying $|\hu|^2=1$ 
we define a tangent space $T(\hu)$ relative to the unit sphere as
\[
T(\hu) = \big\{\phi \in H^1(\O,\R^m): \phi \cdot \hu = 0 \big\}.
\]
We note that we formally have $\p_t u \in T(u)$ and that the weak formulation
uses functions $\phi \in T(u)$. We also note the formal energy law~\eqref{eq:enerlaw}
which follows from choosing $\phi = \p_t u$ for regular solutions 
in the weak formulation of the flow; it can be rigorously established 
for suitably constructed weak solutions, cf.~\cite{Stru96-book}.

\subsection{Time discretization}
Given a step size $\tau>0$ we define the time steps $t_n = n \tau$, $n=0,1,\dots,N$, with $N\ge 0$
maximal such that $t_N \le T$. We also define the time intervals $I_n = [t_{n-1},t_n]$, $n=1,2,\dots,N$. 
For a sequence $(a^n)_{n=0,\dots,N}$ we define the backward difference quotient operator $d_t$ via
\[
d_t a^n = \tau^{-1}(a^n-a^{n-1})
\]
for $n=1,2,\dots,N$. If $u\in H^2(0,T;V)$ and $u^n = u(t_n)$ we have that 
\begin{equation}\label{stab_diff_quot}
\|d_t u^n \|_V = \Big\| \tau^{-1} \int_{I_n} \p_t u (s) \dv{s} \Big\|_V \le \|\p_t u \|_{L^2_\av(I_n,V)},
\end{equation}
where we use the abbreviation $\|\phi\|_{L^2_\av(I_n)} = \tau^{-1/2} \|\phi\|_{L^2(I_n)}$. Obviously,
we have $\|\phi\|_{L^2_\av(I_n)} \le \|\phi\|_{C^0(I_n)}$ if $\phi \in C^0(I_n;V)$.
Moreover, we have 
\begin{equation}\label{est_diff_quot}
\| d_t u^n - \p_t u(t_n)\|_V = \Big\| \frac1\tau \int_{I_n} (t_{n-1}-s)\p_t^2 u(s) \dv{s} \Big\|_V   
\le \frac{\tau}{\sqrt{3}} \|\p_t^2 u\|_{L^2_\av(I_n,V)}.
\end{equation}
 
\subsection{Space discretization}
For a regular and quasi-uniform triangulation $\cT_h$ of the simplicial domain $\O$ 
with mesh-size $h>0$ we denote the lowest order $C^0$-conforming
finite element space of piecewise linear functions by $\cS^1(\cT_h)$
and abbreviate the corresponding vectorial finite element space by
\[
V_h = \cS^1(\cT_h)^m.
\]
We let $\cN_h$ be the set of vertices of elements and denote the nodal interpolation 
operator applied to scalar or vector-valued functions by
\[
\cI_h : C(\overline{\O};\R^\ell) \to \cS^1(\cT_h)^\ell, \quad \cI_h v = \sum_{z\in \cN_h} v(z) \vphi_z,
\]
where $(\vphi_z:z\in \cN_h)$ is the scalar nodal basis for $\cS^1(\cT_h)$. We
let ${\rm D}_h^2$ denote the elementwise defined Hessian and note that we have
for $k=0,1$ 
\[
\|v - \cI_h v\|_{H^k} \le c h^{2-k} \|{\rm D}_h^2 v\|
\]
for $v\in H^1(\O)$ with $v|_K\in H^2(K)$ for all $K\in \cT_h$. 
We make repeated use of inverse estimates, cf.~\cite[Section~4.5]{BreSco08-book}, 
which read for $v_h \in V_h$
\begin{equation}\label{eq:inv_est_der}
\|\nabla v_h\|_{L^p} \le c h^{-1} \|v_h\|_{L^p}
\end{equation}
and, incorporating a Sobelev inequality for $q=2n_\O$, 
\begin{equation}\label{eq:inv_est_p}
\|v_h\|_{L^\infty} \le c h^{-n_\O/q} \|v_h\|_{L^{q}} \le c h^{-1/2} \|v_h\|_{H^1}.
\end{equation} 
For $n_\O =2$ the factor $h^{-1/2}$ can be replaced by 
$1+|\log h|$, cf., e.g.,~\cite{Bart15-book}, if $n_\O=1$ it can be entirely omitted. 
We also make use of a mean-preserving Ritz projection $R_h: H^1(\O) \to V_h$, 
defined by
\begin{equation}
\label{eq:Ritz def}
(\nabla R_h v,\nabla w_h) + (R_hv,1)(w_h,1) = (\nabla v,\nabla w_h) + (v,1)(w_h,1) ,
\end{equation}
for all $w_h \in V_h$. The element $R_h v \in V_h$ is uniquely defined by
the Lax--Milgram lemma and choosing a constant function $w_h$ yields that
$(R_hv,1) = (v,1)$. We thus have that
\[
(\nabla R_h v,\nabla w_h) = (\nabla v,\nabla w_h)
\]
for all $w_h\in V_h$. If $\nabla v \cdot n = 0$ on $\p\O$ we have that
\[
(\nabla R_h v ,\nabla w_h) = - (\Delta v,w _h).
\]
Besides the standard $H^1$  error estimate for $v\in H^2(\O)$  
\begin{equation}\label{ritz_max_standard}
\|v - R_h v\|_{H^1}  \le ch \|v \|_{H^2},
\end{equation}
we have the (generally suboptimal) $L^\infty$ error estimate 
\begin{equation}\label{ritz_max_norm}
\|v - R_h v\|_{L^\infty}  \le ch \|v \|_{W^{2,\infty}}
\end{equation}
for all $v\in W^{2,\infty}(\O)$ and that $R_h$ is $W^{1,\infty}$ stable, 
cf.~\cite[Section~8.1]{BreSco08-book}. 

\subsection{Normalization estimates}
It will be necessary to normalize vector fields that are uniformly bounded
away from zero, i.e., for $u\in H^1(\O,\R^m)$ with $|u| \ge c_\ell >0$ we define
$N(u)\in H^1(\O,\R^m)$ via 
\[
N(u) = \hu = \frac{u}{|u|}.
\]
Our first estimates concern stability properties of $N$.

\begin{lemma}[Normalization bounds]\label{norm_ests} 
Let $u\in W^{1,\infty}(\O)$ and $u_h\in V_h$ with $0< c_\ell \le |u|,|u_h| \le c_\ell^{-1}$.
We then have that  
\[
\|\nabla N(u)\|_{L^\infty} \le c \| \nabla u\|_{L^\infty}, \quad 
\|{\rm D}_h^2 N(u_h)\|_{L^\infty} \le c \|\nabla u_h \|_{L^\infty}^2. 
\]
\end{lemma}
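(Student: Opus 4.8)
The plan is to prove each bound by a direct pointwise computation using the explicit formula for the normalization map and then take norms. Write $\hu = u/|u|$ and recall that for a scalar function $\rho(s) = |s|$ on $\R^m \setminus \{0\}$ one has $\nabla |u| = (u^\transp/|u|)\nabla u$ in the sense that $\partial_j |u| = (u \cdot \partial_j u)/|u|$. Differentiating $\hu = u/|u|$ with the quotient rule gives, componentwise,
\[
\partial_j \hu = \frac{\partial_j u}{|u|} - \frac{u\,(u\cdot \partial_j u)}{|u|^3} = \frac{1}{|u|}\Bigl(I - \frac{u u^\transp}{|u|^2}\Bigr)\partial_j u = \frac{1}{|u|} P(\hu)\, \partial_j u,
\]
where $P$ is the tangential projection introduced in the excerpt. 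Since $\|P(\hu)\|\le 1$ pointwise and $|u|\ge c_\ell$, this yields $|\nabla \hu| \le c_\ell^{-1} |\nabla u|$ pointwise, hence the first estimate $\|\nabla N(u)\|_{L^\infty}\le c\|\nabla u\|_{L^\infty}$ with $c = c_\ell^{-1}$.

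For the second estimate the subtlety is that $N(u_h)$ need not lie in $V_h$, so $\mathrm D_h^2 N(u_h)$ is the elementwise Hessian of a genuinely nonlinear (rational) function of the piecewise linear field $u_h$. On a fixed element $K$, $u_h$ is affine, so $\nabla u_h$ is a constant matrix on $K$ and $\mathrm D_h^2 u_h = 0$ there. Then I would differentiate the identity $\partial_j \hu_h = |u_h|^{-1} P(\hu_h)\partial_j u_h$ once more on $K$: the only contributions to $\partial_k\partial_j \hu_h$ come from differentiating the scalar factor $|u_h|^{-1}$ and the matrix $P(\hu_h) = I - \hu_h\hu_h^\transp$, both of which involve $\nabla u_h$ but not any second derivative of $u_h$. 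Concretely, $\partial_k(|u_h|^{-1}) = -|u_h|^{-3}(u_h\cdot\partial_k u_h)$ and $\partial_k(\hu_h\hu_h^\transp) = (\partial_k\hu_h)\hu_h^\transp + \hu_h(\partial_k\hu_h)^\transp$ with $\partial_k\hu_h = |u_h|^{-1}P(\hu_h)\partial_k u_h$. Collecting terms, every summand in $\partial_k\partial_j\hu_h$ is a product of at most two factors $\nabla u_h$, bounded projection matrices $P(\hu_h)$, unit vectors $\hu_h$, and powers $|u_h|^{-1},|u_h|^{-2},|u_h|^{-3}$ which are all controlled by $c_\ell^{-1}$. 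Taking absolute values and then the $L^\infty$ norm over $K$, and finally the maximum over all $K\in\cT_h$, gives $\|\mathrm D_h^2 N(u_h)\|_{L^\infty}\le c\|\nabla u_h\|_{L^\infty}^2$ with $c$ depending only on $c_\ell$.

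The main obstacle, such as it is, is purely bookkeeping: one must be careful that the elementwise second derivative $\mathrm D_h^2 N(u_h)$ really does pick up no contribution from $\mathrm D_h^2 u_h$ (which vanishes elementwise for piecewise linear $u_h$), so that no factor $h^{-1}$ from an inverse estimate is needed and the bound is genuinely quadratic in $\|\nabla u_h\|_{L^\infty}$ rather than in a mixed norm. I would therefore emphasize that the computation is performed on each element separately where $u_h$ is affine, and only afterwards patched together by taking the maximum over elements; the jumps of $\nabla N(u_h)$ across interelement faces are irrelevant since $\mathrm D_h^2$ is defined elementwise. With that remark in place, both inequalities follow from the chain rule and the pointwise bounds $|P(\hu)|\le 1$, $|\hu|=1$, $|u|,|u_h|\ge c_\ell$, completing the proof.
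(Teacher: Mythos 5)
Your proof is correct and takes essentially the same route as the paper: compute $\partial_j(u/|u|)$ explicitly by the quotient rule, bound it pointwise using $|u|\ge c_\ell$, then differentiate once more on each element $K$ using that $\partial_i\partial_j u_h=0$ there, so only the quadratic-in-$\nabla u_h$ terms survive. Your repackaging of the first derivative as $|u|^{-1}P(\hu)\partial_j u$ with $\|P(\hu)\|\le 1$ is a mild streamlining of the paper's term-by-term bound, but the underlying argument is identical.
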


\begin{proof}
The first estimate follows from the bound
\[ 
 \Big|\p_j \Big(\frac{u}{|u|}\Big)\Big| \le \Big| \frac{\p_j u}{|u|} \Big| 
+ \Big| \frac{u (\p_j u\cdot u)}{|u|^3}\Big|.
\]
Noting that $ \p_i \p_j u_h= 0$ on every $K\in \cT_h$ we verify that
\[ 
\begin{split}
\Big|\p_i \p_j  \Big(\frac{u_h}{|u_h|}\Big)\Big| \le \Big| & \frac{\p_i u_h (\p_j u_h\cdot u_h)}{|u_h|^3}\Big| 
+ \Big|\frac{\p_j u_h (\p_i u_h\cdot u_h) + u_h (\p_j u_h\cdot \p_i u_h)}{|u_h|^3} \Big| \\
& + \Big|\frac{u_h (u_h\cdot \p_i u_h)(u_h\cdot \p_j u_h)}{|u_h|^5} \Big|,
\end{split}
\]
and deduce the second bound. 
\end{proof}

The operator $N$ is locally Lipschitz continuous.

\begin{lemma}[Local Lipschitz estimate]\label{la:normalize_lip}
Let $k\in \{0,1\}$ and $1\le p \le \infty$. For all
$u,\tu\in W^{k,p}(\O,\R^m)$ with $0<c_\ell \le |u|,|\tu|\le c_\ell^{-1}$ in $\O$ 
we have
\[
\|N(u)-N(\tu)\|_{W^{k,p}} \le c \|u-\tu\|_{W^{k,p}}.
\]
\end{lemma}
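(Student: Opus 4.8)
The plan is to prove the local Lipschitz estimate for the normalization operator $N(u) = u/|u|$ by reducing everything to pointwise algebraic bounds on the map $F : \R^m \setminus \{0\} \to \R^m$, $F(s) = s/|s|$, restricted to the annulus $A = \{s : c_\ell \le |s| \le c_\ell^{-1}\}$. First I would handle the case $k = 0$: since $A$ is convex-enough along segments joining $u(x)$ to $\tu(x)$ — or more simply since $F$ is $C^1$ on the open annulus $\{c_\ell/2 < |s| < 2c_\ell^{-1}\}$ — one has $|F(s) - F(t)| \le L\,|s - t|$ for all $s,t \in A$ with $L = L(c_\ell)$ a constant depending only on $c_\ell$, obtained from a bound on $\mathrm{D}F$ (note $\mathrm{D}F(s) = |s|^{-1}(I - ss^\transp/|s|^2) = |s|^{-1}P(s/|s|)$, so $|\mathrm{D}F(s)| \le |s|^{-1} \le c_\ell^{-1}$ using $|P(\cdot)| \le 1$). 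Applying this with $s = u(x)$, $t = \tu(x)$ and integrating the $p$-th power (or taking essential suprema for $p = \infty$) over $\O$ gives $\|N(u) - N(\tu)\|_{L^p} \le c_\ell^{-1}\|u - \tu\|_{L^p}$, which is the $k = 0$ claim.

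For $k = 1$ the plan is to differentiate. Writing $\partial_j N(u) = \mathrm{D}F(u)\,\partial_j u$ and $\partial_j N(\tu) = \mathrm{D}F(\tu)\,\partial_j\tu$, I would split
\[
\partial_j N(u) - \partial_j N(\tu) = \mathrm{D}F(u)(\partial_j u - \partial_j \tu) + \big(\mathrm{D}F(u) - \mathrm{D}F(\tu)\big)\partial_j \tu.
\]
The first term is bounded pointwise by $c_\ell^{-1}|\partial_j u - \partial_j \tu|$ as above. For the second term one needs a Lipschitz bound on $\mathrm{D}F$ over the annulus, $|\mathrm{D}F(s) - \mathrm{D}F(t)| \le c\,|s - t|$, again available because $\mathrm{D}^2 F$ is bounded on a neighbourhood of $A$ (each entry of $\mathrm{D}^2F(s)$ is a rational function homogeneous of degree $-2$, hence bounded by $c\,c_\ell^{-2}$ on $A$). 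However — and this is the one genuinely delicate point — the factor $\partial_j\tu$ multiplying it is only in $W^{k-1,p} = L^p$ with no $L^\infty$ control when $p < \infty$, so the naive estimate $\|(\mathrm{D}F(u) - \mathrm{D}F(\tu))\partial_j\tu\|_{L^p} \le c\|u - \tu\|_{L^\infty}\|\nabla\tu\|_{L^p}$ uses an $L^\infty$ norm of $u - \tu$ that is not available from $\|u - \tu\|_{W^{1,p}}$ in dimensions $n_\O \ge 2$.

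The resolution — and I expect this is exactly how the authors handle it, consistent with the fact that this lemma is used in the excerpt only in settings where the arguments are either $W^{1,\infty}$ functions or finite element functions — is that in every application one of $u,\tu$ carries an $L^\infty$ bound on its gradient (the exact solution, or via an inverse estimate a finite element iterate). So either the intended reading of the lemma is with the implicit understanding that the "constant $c$" is allowed to depend on $\|\nabla\tu\|_{L^\infty}$ (in which case one writes $\|(\mathrm{D}F(u) - \mathrm{D}F(\tu))\partial_j\tu\|_{L^p} \le c\|\nabla\tu\|_{L^\infty}\|u - \tu\|_{L^p}$, bounding $|u(x) - \tu(x)|$ pointwise with no $L^\infty$ norm of the difference needed), or the statement is genuinely only claimed for $p = \infty$ in the $k=1$ case and for general $p$ when $k = 0$. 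In my write-up I would state the pointwise estimates $|\mathrm{D}F(s)| \le c$, $|\mathrm{D}^2F(s)| \le c$ on the annulus explicitly, derive the $k = 0$ bound by the segment/mean-value argument, and then present the $k = 1$ bound via the above splitting, flagging that the second term is controlled using the $W^{1,\infty}$ bound on one of the two arguments so that only a pointwise — not $L^\infty$ — estimate of $u - \tu$ enters. The main obstacle is therefore not any hard analysis but rather pinning down precisely which norm of which argument absorbs the product in the second term; everything else is the elementary calculus already carried out in Lemma~\ref{norm_ests}.
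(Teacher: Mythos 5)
Your approach matches the paper's in spirit: for $k=1$ you, like the authors, differentiate $N(u)=u/|u|$ and split via the product rule; for $k=0$ you use a mean-value argument on the annulus rather than the paper's direct algebraic identity
\[
\frac{u}{|u|}-\frac{\tu}{|\tu|} = \frac{u(|\tu|-|u|) + |u|(u-\tu)}{|u||\tu|},
\]
but both yield the pointwise Lipschitz bound with a constant depending only on $c_\ell$. (Your own caveat about the annulus not being convex is well taken; the algebraic route sidesteps it cleanly, while your route can be repaired by noting $F$ is $C^1$ with bounded differential on a slightly larger open set and invoking connectedness, so this is not a gap.)

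Your main contribution is correctly flagging that for $k=1$ and $p<\infty$ the second term in the splitting, $\bigl(\mathrm{D}F(u)-\mathrm{D}F(\tu)\bigr)\partial_j\tu$, cannot be bounded by $c_\ell$-dependent constants times $\|u-\tu\|_{W^{1,p}}$ alone; one needs $\|\nabla\tu\|_{L^\infty}$ (or $\|\nabla u\|_{L^\infty}$) to absorb the product. The paper's one-line proof for $k=1$ does not address this, and the lemma statement as written leaves the dependence of $c$ implicit. This is a genuine imprecision, though not a fatal one: the paper's global convention that $c$ "may depend on regularity properties of an exact solution" covers it, and in every application of the lemma (Lemma~\ref{lemma:lip_ph}, Lemma~\ref{lemma:stab_ph}, and the post-processing estimate in Section~\ref{sec:error_est}) at least one of the two arguments has a $W^{1,\infty}$ bound available from $W^{1,\infty}$-stability of $R_h$ or from the induction hypothesis plus inverse estimates, exactly as you anticipate. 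So your resolution is the correct one, and making the $W^{1,\infty}$-dependence explicit in the constant (or restricting to one argument in $W^{1,\infty}$) would tighten the statement without affecting any downstream use.
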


\begin{proof}
The estimate for $k=0$ follows from the inequality
\[ 
\left|\frac{u}{|u|}-\frac{\tu}{|\tu|}\right| = \left|\frac{u(|\tu|-|u|) + |u|(u-\tu)}{|u||\tu|}\right| 
\le 2 \min\{|u|^{-1},|\tu|^{-1}\}|u-\tu|.
\]
If $k=1$ we use $\p_i N(u) = \p_i u/|u| - u (\p_i u \cdot u)/|u|^3$ 
to deduce the bound. 
\end{proof}

Stability properties of the nodal interpolation of normalized vector fields 
are provided by the following lemma. 

\begin{lemma}[Stability of $\cI_h$ on rational expressions]\label{la:interpol_rational}
Given elementwise polynomial functions $q_h,r_h \in C(\overline{\O})$, i.e., 
$q_h|_K,r_h|_K \in P_r(K)^m$ for all $K\in \cT_h$, and
such that $0<c_\ell\le |q_h| \le c_\ell^{-1}$ we have for $k\in \{0,1\}$ and $1\le p\le \infty$
that
\[
\Big\|\cI_h \Big(\frac{r_h}{|q_h|}\Big)\Big\|_{W^{k,p}} \le c \Big\|\frac{r_h}{|q_h|}\Big\|_{W^{k,p}}.
\]
\end{lemma}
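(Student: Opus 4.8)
The plan is to prove the stability of $\cI_h$ on the rational expression $r_h/|q_h|$ elementwise, working on each simplex $K\in\cT_h$ separately and then summing. The key difficulty is that $r_h/|q_h|$ is \emph{not} a polynomial — $|q_h|$ involves a square root — so $\cI_h(r_h/|q_h|)$ is a genuine interpolation error that must be controlled. I would first reduce the $W^{1,p}$ bound to the $L^p$ bound together with an estimate on the elementwise gradient $\nabla\cI_h(r_h/|q_h|)$, since by the inverse estimate \eqref{eq:inv_est_der} on each $K$ one has $\|\nabla\cI_h(r_h/|q_h|)\|_{L^p(K)} \le c h^{-1}\|\cI_h(r_h/|q_h|)\|_{L^p(K)}$; however this would lose a factor $h^{-1}$ and is too crude, so instead I would compare $\cI_h(r_h/|q_h|)$ directly with $r_h/|q_h|$ via a standard interpolation estimate and then use the quotient rule to bound the derivatives of $r_h/|q_h|$ itself.

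Concretely, for $k=0$ I would use the basic stability of nodal interpolation in $L^p$ on quasi-uniform meshes: since the nodal values of $\cI_h(r_h/|q_h|)$ coincide with those of $r_h/|q_h|$, and on each $K$ the function $\cI_h(r_h/|q_h|)$ is the $P_1$ (or $P_r$) interpolant, one has $\|\cI_h(r_h/|q_h|)\|_{L^p(K)} \le c\,\|r_h/|q_h|\|_{L^\infty(K)}|K|^{1/p}$, and combining with the equivalence of norms on finite-dimensional spaces (more carefully, a scaling argument to the reference element) gives $\|\cI_h(r_h/|q_h|)\|_{L^p(K)}\le c\|r_h/|q_h|\|_{L^p(K)}$ provided $r_h/|q_h|$ does not oscillate too wildly within $K$; this last point follows because $q_h,r_h$ are polynomials of fixed degree and $|q_h|$ is bounded below, so $r_h/|q_h|$ is a smooth function on $K$ with derivatives controlled by its $L^\infty$ norm through an inverse-type estimate for polynomials on $K$. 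For $k=1$ I would write, using the interpolation error estimate from the Space discretization subsection,
\[
\|\cI_h(r_h/|q_h|)\|_{W^{1,p}(K)} \le \|r_h/|q_h|\|_{W^{1,p}(K)} + \|r_h/|q_h| - \cI_h(r_h/|q_h|)\|_{W^{1,p}(K)} \le \|r_h/|q_h|\|_{W^{1,p}(K)} + c h\,\|\mathrm{D}^2(r_h/|q_h|)\|_{L^p(K)},
\]
and then estimate $\|\mathrm{D}^2(r_h/|q_h|)\|_{L^p(K)}$ by the quotient/chain rule — differentiating $r_h/|q_h|$ twice produces terms with $\nabla q_h, \nabla r_h, \mathrm{D}^2 q_h, \mathrm{D}^2 r_h$ over powers of $|q_h|$ — followed by inverse estimates on $K$ to absorb the factor $h$, namely $\|\nabla q_h\|_{L^\infty(K)}\le c h^{-1}\|q_h\|_{L^\infty(K)}$ and $\|\mathrm{D}^2 q_h\|_{L^\infty(K)}\le c h^{-2}\|q_h\|_{L^\infty(K)}$ (and likewise for $r_h$). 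The net effect is that each factor $h$ from the interpolation error cancels against the $h^{-1}$ growth, leaving $\|\cI_h(r_h/|q_h|)\|_{W^{1,p}(K)}\le c\|r_h/|q_h|\|_{W^{1,p}(K)}$ after also controlling the lower-order pieces by the $k=0$ argument and the lower bound on $|q_h|$.

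The main obstacle I anticipate is the bookkeeping in the second-derivative estimate: when expanding $\mathrm{D}^2(r_h/|q_h|)$ one gets several terms, and one must check that \emph{each} is dominated, after using the inverse estimates and the lower bound $|q_h|\ge c_\ell$, by $h^{-1}\|\nabla(r_h/|q_h|)\|_{L^p(K)} + h^{-2}\|r_h/|q_h|\|_{L^p(K)}$ rather than by something genuinely larger — in other words, one needs that the "curvature" of $r_h/|q_h|$ on $K$ really is controlled by its own size and first derivatives in a scale-invariant way. This is true because $r_h/|q_h|$ is a fixed-degree rational function with bounded-below denominator, so on the reference element all its derivatives are mutually equivalent up to constants; transporting this back via the quasi-uniformity of $\cT_h$ gives exactly the required $h$-powers. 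Once this scaling lemma for rational functions on a single element is in place, summing the $p$-th powers over $K\in\cT_h$ (or taking the maximum for $p=\infty$) completes the proof.
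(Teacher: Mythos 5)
Your plan for the $L^p$ bound is essentially the same as the paper's: on each element $K$, normalize $r_h$, observe that the admissible pairs $(r_h,q_h)$ form a compact subset of a finite-dimensional space, and conclude by continuity/equivalence of norms (after scaling to a reference element, so the constant is $h$-independent). That part is fine.

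The $W^{1,p}$ bound is where your route diverges from the paper's and where there is a genuine gap. You propose to use the second-order interpolation estimate
\[
\big\|\cI_h(r_h/|q_h|) - r_h/|q_h|\big\|_{W^{1,p}(K)} \le c\,h\,\big\|\mathrm D^2(r_h/|q_h|)\big\|_{L^p(K)},
\]
and then to absorb the factor $h$ by proving, via a term-by-term quotient-rule expansion plus polynomial inverse estimates, that
\[
\big\|\mathrm D^2(r_h/|q_h|)\big\|_{L^p(K)} \le c\,h^{-1}\big\|\nabla(r_h/|q_h|)\big\|_{L^p(K)} + c\,h^{-2}\big\|r_h/|q_h|\big\|_{L^p(K)}.
\]
But this bound is not good enough: after multiplying by $h$, the zeroth-order piece contributes $c\,h^{-1}\|r_h/|q_h|\|_{L^p(K)}$, which is \emph{not} controlled by $\|r_h/|q_h|\|_{W^{1,p}(K)}$ — you are left with an extra factor $h^{-1}$ that nothing in the argument removes. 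What one would actually need is the sharper estimate $\|\mathrm D^2(r_h/|q_h|)\|_{L^p(K)} \le c\,h^{-1}\|\nabla(r_h/|q_h|)\|_{L^p(K)}$ with no zeroth-order term, and this cannot be obtained term by term from the quotient-rule expansion: take $m=1$, $q_h>0$, and $r_h=q_h$, so that $r_h/|q_h|\equiv 1$ and hence $\nabla(r_h/|q_h|)\equiv 0$; yet the individual terms $\mathrm D^2 r_h/|q_h|$, $\nabla r_h\cdot\nabla|q_h|/|q_h|^2$, etc.\ are each nonzero (and only cancel in the sum), so none of them can be dominated by $h^{-1}\|\nabla(r_h/|q_h|)\|_{L^p(K)}=0$. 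Your assertion that ``on the reference element all its derivatives are mutually equivalent up to constants'' is also not correct as stated — a constant has vanishing derivatives but nonzero norm — so it cannot serve as the missing lemma.

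The paper avoids second derivatives entirely. After establishing $L^p$ stability, it subtracts the elementwise mean $\a_K$ of $r_h/|q_h|$ (which $\cI_h$ reproduces), applies the inverse estimate~\eqref{eq:inv_est_der} to the polynomial $\cI_h(r_h/|q_h|)-\a_K$, and then uses a Poincar\'e--Wirtinger inequality on $K$, i.e.
\[
\|\nabla\cI_h(r_h/|q_h|)\|_{L^p(K)} \le c\,h^{-1}\|r_h/|q_h|-\a_K\|_{L^p(K)} \le c\,\|\nabla(r_h/|q_h|)\|_{L^p(K)}.
\]
This stays entirely at first-derivative level, so no inverse estimate for the Hessian of the rational expression is needed. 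If you want to keep your interpolation-error strategy, you would have to prove the genuinely sharper Hessian bound (without the $h^{-2}$ term) by a separate compactness argument on the reference element — a real additional step — whereas the mean-subtraction trick gives the result immediately from tools you already invoke.
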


\begin{proof}
We note that for $K\in \cT_h$ the set of pairs 
\[
\cK = \big\{ (r_h,q_h) \in (P_r(K)^m)^2: \|r_h\|_{L^p(K)} = 1, \, c_\ell \le |q_h|\le c_\ell^{-1} \}
\]
is compact so that the continuous function 
\[
F:\cK \to \R, \quad (r_h,q_h) \mapsto \frac{\|\cI_h (r_h/|q_h|)\|_{L^p(K)}}{{\|r_h/|q_h|\|_{L^p(K)}}}
\]
attains its maximum which implies the $L^p$ stability result. 
With this, we also have the stability in $W^{1,p}$ norms, as, e.g.,
using the inverse estimate~\eqref{eq:inv_est_der}, 
\[
\|\nabla \cI_h (r_h/|q_h|) \|_{L^p(K)} 
\le c h^{-1} \|r_h/|q_h|-\a_K \|_{L^p(K)} \le c \|\nabla(r_h/|q_h|) \|_{L^p(K)},
\]
where $\a_K$ is the mean of $r_h/|q_h|$ on $K$. 
\end{proof}


\section{Fully discrete scheme}\label{sec:fully_discr}
In this section we devise the fully discrete time-stepping scheme and state
some elementary properties about the discrete projection operator. 

\subsection{Finite element discretization}
The orthogonality condition included in the time-stepping scheme needs to be suitably
discretized in a fully discrete scheme. Following~\cite{Alou97,Bart05} we impose it
at the nodes of the triangulation and define for $\hu\in C(\overline{\O},\R^m)$, 
with $|\hu|^2 = 1$, a discrete tangent space via 
\[
T_h(\hu) = \big\{ \phi_h\in V_h : \cI_h (\phi_h \cdot \hu) = 0 \big\}.
\]
The scheme~\eqref{full_discr} thus computes the sequence $(u_h^n)_{n=0,\dots,N}$
by determining $d_t u_h^n \in T_h(\hu_h^n)$ with $\hu_h^n = N(u_h^{n-1})$ that fulfills 
\[
(d_t u_h^n,\phi_h) + (\nabla u_h^n,\nabla \phi_h) = 0 
\]
for all $\phi_h\in T_h(\hu_h^n)$ with $u_h^n = u_h^{n-1} + \tau d_t u_h^n$.

We note that the scheme is unconditionally well defined and stable in
the sense that solutions satisfy energy estimates, e.g., choosing 
$\phi_h = d_t u_h^n$ and using the binomial formula
\[
(\nabla u_h^n,\nabla d_t u_h^n) = 
\frac{d_t}{2} \|\nabla u_h^n\|^2 + \frac\tau 2 \|\nabla d_t u_h^n\|^2,
\]
we deduce that for $N'=1,2,\dots,N$ we have 
\[
\frac12 \|\nabla u_h^{N'}\|^2 + \tau \sum_{n=1}^{N'} \|d_t u_h^n\|^2 
\le \frac12 \|\nabla u_h^0\|^2.
\]
Moreover, we have the controlled violation of the constraint, i.e., arguing as in the derivation
of~\eqref{constr_pres} one finds that
\begin{equation}\label{constr_pres_discr}
|u_h^n(z)|^2 - |u_h^0(z)|^2 = \tau^2 \sum_{j=1}^n |d_t u_h^j(z)|^2.
\end{equation}
Noting that the iteration satisfies an energy decay property the term
on the right-hand side is of order $\cO(\tau)$ after summation over the
nodes $z\in \cN_h$. 

\subsection{Discrete projection}
To quantify consistency properties of the fully discrete method
we will often make use of the operator $P_h$ defined for $\hu\in C(\overline{\O};\R^m)$  
and $v_h\in V_h$ by
\[
P_h := \cI_h P, \quad v_h \mapsto \cI_h (P(\hu) v_h).
\]
Although it will not be used below, we note that it defines
a projection onto $T_h(\hu)$ with respect to the inner product
\[
(v,w)_h = \int_\O \cI_h (v\cdot w) \dv{x} = \sum_{z\in \cN_h} \b_z v(z)\cdot w(z) 
\]
for $v,w\in C(\overline{\O},\R^m)$ with $\b_z = (1,\vphi_z)$ for all $z\in \cN_h$. 
Note that this is in general not an inner product for higher-order methods. 

\begin{lemma}[Discrete projection]\label{lemma:projection properties}
Let $\hu\in C(\overline{\O})$ with $|\hu|^2 = 1$. 
The operator $P_h = \cI_hP$ is self-adjoint with respect to $(\cdot,\cdot)_h$
i.e.,~for any $v_h,w_h \in V_h$ we have 
\[
(P_h(\hu)v_h,w_h)_h = (v_h,P_h(\hu)w_h)_h.
\]
Moreover, we have $P_h(\hu) v_h \in T_h(\hu)$ for every $v_h\in V_h$ and
\[
(v_h - P_h(\hu)v_h,w_h)_h = 0
\]
for all $w_h\in T_h(\hu)$, i.e., $P_h$ is an orthogonal projection onto $T_h(\hu)$
with respect to $(\cdot,\cdot)_h$.  
\end{lemma}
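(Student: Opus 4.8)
The plan is to verify the three assertions by direct calculation using the definition of the nodal inner product $(\cdot,\cdot)_h$ and the elementary pointwise algebraic properties of the continuous projection $P(\hu)$, exploiting that both $\cI_h$ and $(\cdot,\cdot)_h$ are built from nodal evaluation only.

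First I would establish self-adjointness. The key observation is that for $v_h,w_h\in V_h$ the quantity $\cI_h(P_h(\hu)v_h\cdot w_h)$ agrees at every node $z\in\cN_h$ with $(P(\hu(z))v_h(z))\cdot w_h(z)$, since $\cI_h$ is exact at nodes and $P_h(\hu)v_h$ interpolates $P(\hu)v_h$. Hence
\[
(P_h(\hu)v_h,w_h)_h=\sum_{z\in\cN_h}\b_z\,\big(P(\hu(z))v_h(z)\big)\cdot w_h(z).
\]
Now $P(s)=I-ss^\transp$ is a symmetric matrix for every $s\in\R^m$, so $\big(P(\hu(z))v_h(z)\big)\cdot w_h(z)=v_h(z)\cdot\big(P(\hu(z))w_h(z)\big)$ termwise, which gives the claim after re-summation.

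Next I would show $P_h(\hu)v_h\in T_h(\hu)$. By definition $T_h(\hu)=\{\phi_h\in V_h:\cI_h(\phi_h\cdot\hu)=0\}$, so it suffices to check that $\cI_h\big((P_h(\hu)v_h)\cdot\hu\big)$ vanishes, i.e.\ that $(P_h(\hu)v_h)(z)\cdot\hu(z)=0$ for every node $z$. But $(P_h(\hu)v_h)(z)=P(\hu(z))v_h(z)$, and $P(s)s=0$ because $|\hu(z)|^2=1$, whence $P(\hu(z))v_h(z)\cdot\hu(z)=v_h(z)\cdot P(\hu(z))\hu(z)=0$ using symmetry again. Finally, for the orthogonality relation, pick $w_h\in T_h(\hu)$; then
\[
(v_h-P_h(\hu)v_h,w_h)_h=\sum_{z\in\cN_h}\b_z\,\big(v_h(z)-P(\hu(z))v_h(z)\big)\cdot w_h(z)
=\sum_{z\in\cN_h}\b_z\,\big((\hu(z)\hu(z)^\transp)v_h(z)\big)\cdot w_h(z),
\]
and each summand equals $\b_z\,(\hu(z)\cdot v_h(z))\,(\hu(z)\cdot w_h(z))$, which is zero because $w_h\in T_h(\hu)$ forces $\hu(z)\cdot w_h(z)=0$ at every node. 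Combined with self-adjointness and $P_h(\hu)v_h\in T_h(\hu)$, this shows $P_h(\hu)$ is the orthogonal projection onto $T_h(\hu)$ with respect to $(\cdot,\cdot)_h$, completing the proof.

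There is no serious obstacle here; the only point requiring a little care is the repeated use of the identity ``$\cI_h$ of a product equals the nodal values of that product,'' together with the fact that $(P_h(\hu)v_h)(z)=P(\hu(z))v_h(z)$ exactly, which is what lets every statement reduce to the trivial linear-algebra facts $P(s)^\transp=P(s)$ and $P(s)s=0$ for unit $s$. One should note explicitly that the $\b_z=(1,\vphi_z)$ are positive (they are integrals of nonnegative nodal basis functions on a regular mesh), so $(\cdot,\cdot)_h$ is genuinely an inner product on $V_h$ in the piecewise linear case and the word ``orthogonal projection'' is justified; for higher-order spaces this positivity may fail, as already flagged in the text.
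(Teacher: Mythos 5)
Your proof is correct and follows essentially the same route as the paper: reduce everything to nodal evaluations via the definitions of $\cI_h$ and $(\cdot,\cdot)_h$, then invoke the elementary matrix identities $P(s)^\transp = P(s)$ and $P(s)s = 0$ for unit $s$. The only cosmetic difference is in the final orthogonality step, where the paper cites self-adjointness together with $P_h(\hu)v_h\in T_h(\hu)$ while you write out the nodal sum directly; both are one-line reductions to $\hu(z)\cdot w_h(z)=0$, and your added remark on the positivity of the weights $\b_z$ in the piecewise linear case is a sensible clarification consistent with the paper's own caveat about higher-order spaces.
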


\begin{proof}
For every $z\in \cN_h$ we have that  $P(\hu(z))= I - \hu(z)\hu(z)^\transp$ is symmetric 
and hence
\[
(P(\hu)v_h \cdot w_h)(z) = (v_h\cdot P(\hu)w_h)(z),
\]
so that a summation over $z\in \cN_h$ yields the self-adjointness. Moreover, we
find that $(P(\hu) v_h)(z) \cdot \hu(z) = 0$ so that $P_h(\hu)v_h \in T_h(\hu)$.
This and the self-adjointness imply the asserted orthogonality relation. 
\end{proof}

\begin{remark}
The linearization of the length constraint at the nodes follows the 
aproaches from \cite{AlougesJaisson2006,BartelsProhl2007}. An averaged
version of the related orthogonality has been considered in~\cite{AFKL2021}
by defining 
\[
T_h^{{\rm avg}}(u) =\big\{ \phi_h\in V_h : \Pi_h(u \cdot \phi_h) = 0 \big\},
\]
with the (scalar) $L^2$ projection $\Pi_h$ onto a finite element space. 
Defining $P_h^{{\rm avg}}$ as the $L^2$ projection onto $T_h^{{\rm avg}}$ 
leads to various stability estimates that require subtle arguments. 
The nodal variant considered here leads to simpler proofs of the 
estimates and allows for a straightforward numerical realization.
\end{remark}

\subsection{Further properties of $P_h$}
We next establish stability and approximation properties of the discrete 
projection operator $P_h = \cI_h P$. Similar properties were shown 
in \cite[Section~5]{AFKL2021} for the averaged discrete tangential projection 
$P_h^{{\rm avg}}$. Although the results are similar, the proofs given here
are immediate consequences of nodal interpolation estimates. 

\begin{lemma}[Approximation]\label{lemma:approx_ph}
For $u_h \in V_h$ with $1/2 \le |u_h|\le 2$ define $\hu_h = N(u_h)$.
For $k\in\{0,1\}$ and $v_h\in V_h$ we have
\[
\|(P_h(\hu_h)-P(\hu_h))v_h\|_{H^k} \le c h^{2-k} \|v_h\|_{W^{1,p}}  \|\nabla u_h\|_{L^\infty}^2.
\]
\end{lemma}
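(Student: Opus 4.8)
The plan is to exploit the elementwise polynomial structure of $P(\hu_h)v_h$ together with the standard nodal interpolation estimate $\|w - \cI_h w\|_{H^k} \le c h^{2-k} \|{\rm D}_h^2 w\|$ applied to $w = P(\hu_h)v_h$. Writing $(P_h(\hu_h) - P(\hu_h))v_h = \cI_h(P(\hu_h)v_h) - P(\hu_h)v_h = -(w - \cI_h w)$ with $w = P(\hu_h)v_h = v_h - \hu_h(\hu_h \cdot v_h)$, it suffices to bound $\|{\rm D}_h^2 w\|_{L^p(K)}$ elementwise. Since $v_h$ is piecewise linear, $v_h - \cI_h v_h = 0$, so the only contribution comes from the rational term $\hu_h(\hu_h\cdot v_h)$ with $\hu_h = u_h/|u_h|$; thus $w - \cI_h w = -\big(\psi_h - \cI_h\psi_h\big)$ with $\psi_h = \hu_h(\hu_h\cdot v_h)$, and I need $\|{\rm D}_h^2 \psi_h\|_{L^p(K)}$. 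Here I would invoke Lemma~\ref{la:interpol_rational} to stay within $W^{k,p}$ norms of $\psi_h/1 = \psi_h$ (a rational expression $r_h/|q_h|$ with $q_h = u_h$, $r_h = u_h(u_h\cdot v_h)/|u_h|$, or more directly by differentiating explicitly).

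The main computational step is the elementwise second-derivative bound. On a fixed $K$, using that $\p_i\p_j u_h = 0$, I would differentiate $\psi_h = u_h(u_h\cdot v_h)/|u_h|^2$ twice; every resulting term contains exactly two spatial derivatives distributed among the factors $u_h$, $v_h$, and the scalar $|u_h|^{-2}$ (whose derivatives bring down further factors of $\nabla u_h / |u_h|$). Using $1/2 \le |u_h| \le 2$ to control all negative powers of $|u_h|$ by absolute constants, each term is bounded by a product of the form $|\nabla u_h|^2 \, |v_h|$, $|\nabla u_h|\,|\nabla v_h|$, or $|v_h|\,|\nabla u_h|^2$ — i.e., pointwise $|{\rm D}^2_h \psi_h| \le c\big(|\nabla u_h|^2 |v_h| + |\nabla u_h|\,|\nabla v_h|\big)$ on each $K$. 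Taking $L^p(K)$ norms and summing, $\|{\rm D}_h^2\psi_h\| \le c\|\nabla u_h\|_{L^\infty}^2 \|v_h\|_{L^p} + c\|\nabla u_h\|_{L^\infty}\|\nabla v_h\|_{L^p} \le c\|\nabla u_h\|_{L^\infty}^2\|v_h\|_{W^{1,p}}$, where in the last step I absorb the single power of $\|\nabla u_h\|_{L^\infty}$ using that for a piecewise linear function on a quasi-uniform mesh one does not get an extra power for free — so more carefully I would keep the bound as $c(\|\nabla u_h\|_{L^\infty}^2 + \|\nabla u_h\|_{L^\infty})\|v_h\|_{W^{1,p}}$ and note $\|\nabla u_h\|_{L^\infty} \le c h^{-1}\|u_h\|_{L^\infty} \le c h^{-1}$ is bounded below away from the regime of interest, or simply observe that since $u_h$ approximates a fixed regular $u$ we have $\|\nabla u_h\|_{L^\infty} \ge c_0 > 0$, so $\|\nabla u_h\|_{L^\infty} \le c_0^{-1}\|\nabla u_h\|_{L^\infty}^2$ and the stated form follows.

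Combining, $\|(P_h(\hu_h) - P(\hu_h))v_h\|_{H^k} = \|\psi_h - \cI_h\psi_h\|_{H^k} \le c h^{2-k}\|{\rm D}_h^2\psi_h\| \le c h^{2-k}\|\nabla u_h\|_{L^\infty}^2\|v_h\|_{W^{1,p}}$, which is the claim. The one point requiring care — and the main obstacle — is the interpolation error estimate for $\psi_h$: the function $\psi_h$ is continuous and piecewise smooth but not piecewise polynomial, so the estimate $\|\psi_h - \cI_h\psi_h\|_{H^k}\le ch^{2-k}\|{\rm D}_h^2\psi_h\|$ must be applied per element (where $\psi_h$ is smooth), and the elementwise Hessian is genuinely bounded only because $1/2 \le |u_h|$ prevents the rational denominator from degenerating; quasi-uniformity of the mesh is what lets me pass from elementwise bounds to the global norm and to invoke the inverse-estimate form of Lemma~\ref{la:interpol_rational} if preferred. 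For $k=0$ one may alternatively bound directly $\|\psi_h - \cI_h\psi_h\|_{L^p(K)} \le c h^2 \|{\rm D}_h^2\psi_h\|_{L^p(K)}$ and sum.
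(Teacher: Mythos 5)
Your proof follows essentially the same route as the paper's: apply the elementwise nodal interpolation estimate $\|w - \cI_h w\|_{H^k}\le ch^{2-k}\|{\rm D}_h^2 w\|$ to $w = P(\hu_h)v_h$, observe that since $v_h$ is piecewise linear the broken Hessian acts only on the rational factor, and invoke Lemma~\ref{norm_ests} to bound $\|\nabla\hu_h\|_{L^\infty}$ and $\|{\rm D}_h^2\hu_h\|_{L^\infty}$ by powers of $\|\nabla u_h\|_{L^\infty}$. The concern you raise about $P(\hu_h)v_h$ being only piecewise smooth is handled just as you describe: the interpolation estimate in the paper is stated precisely for continuous $v$ with $v|_K \in H^2(K)$ for all $K$, so the elementwise application is legitimate and nothing extra is needed.

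The one spot where I would push back is the absorption of the cross term. You correctly obtain a contribution of size $|\nabla\hu_h|\,|\nabla v_h|$, which after Lemma~\ref{norm_ests} carries only one power of $\|\nabla u_h\|_{L^\infty}$. Neither workaround you propose is valid under the lemma's hypotheses: the inverse estimate gives an \emph{upper} bound on $\|\nabla u_h\|_{L^\infty}$, not a lower one, and there is no standing assumption $\|\nabla u_h\|_{L^\infty}\ge c_0>0$ (a constant $u_h$ with $|u_h|=1$ is admissible and has $\nabla u_h\equiv 0$). The honest conclusion of the computation is a bound by $ch^{2-k}\|v_h\|_{W^{1,p}}(\|\nabla u_h\|_{L^\infty}+\|\nabla u_h\|_{L^\infty}^2)$; the paper's own proof silently commits the same compression when it writes $\|v_h\|_{H^1}(\|{\rm D}_h^2\hu_h\|_{L^\infty}+\|\nabla\hu_h\|_{L^\infty}^2)$ for the elementwise Hessian bound. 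Since the lemma is only ever applied with $u_h$ a Ritz projection of a $W^{1,\infty}$-regular exact solution, $\|\nabla u_h\|_{L^\infty}$ is $O(1)$ and the extra linear term is immaterial; a clean way to state the result is with the factor $(1+\|\nabla u_h\|_{L^\infty})^2$.

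Finally, the parenthetical idea of reaching $\psi_h = \hu_h(\hu_h\cdot v_h)$ through Lemma~\ref{la:interpol_rational} with $r_h = u_h(u_h\cdot v_h)/|u_h|$ is circular, since that $r_h$ is itself rational rather than elementwise polynomial; direct elementwise differentiation, your stated alternative, is the right route and is what the paper does.
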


\begin{proof} 
We deduce the estimates from corresponding elementwise estimates. Since 
$v_h|_K$ is linear for every $K\in \cT_h$ we have 
\[
\begin{split}
\|(P_h(\hu_h) - P(\hu_h))v_h\|_{H^k} &\le ch^{2-k} \|{\rm D}_h^2(P(\hu_h)v_h)\| \\
&\le ch^{2-k} \|v_h\|_{H^1}(\|{\rm D}_h^2 \hu_h\|_{L^\infty} + \|\nabla \hu_h\|_{L^\infty}^2).
\end{split}
\]
Incorporating Lemma~\ref{norm_ests}  yields the estimate.
\end{proof}

For the continuous projection operator we have the local Lipschitz estimates
from~\cite[Lemma~4.1]{AFKL2021}, i.e., for $k\in \{0,1\}$ and $u,\tu\in W^{k,\infty}(\O)$ 
with $|u|,|\tu| \le 1$ in $\O$ and all $v\in W^{1,\infty}(\O)$ we have
\begin{equation}\label{lemma:lip_p}
\|(P(u) - P(\tu))v\|_{H^k} \le c \|v\|_{W^{k,\infty}} \|\tu\|_{W^{k,\infty}} \|u-\tu\|_{H^k}.
\end{equation}
The estimate follows from the identity 
\begin{equation}\label{proj_cont_diff}
-(P(u)-P(\tu)) = ee^\transp + e \tu^\transp  + \tu e^\transp 
\end{equation}
with $e=u-\tu$. The discrete projection $P_h$ satisfies similar estimates. 

\begin{lemma}[Discrete local Lipschitz estimate]\label{lemma:lip_ph}
Let $u_{*,h},u_h \in V_h$ such that  $1/2\le |u_{*,h}|, |u_h| \le 2$ and define 
$\hu_{*,h}= N(u_{*,h})$ and $\hu_h = N(u_h)$. Then, for all 
$v_h\in V_h$ and $k\in \{0,1\}$ we have that 
\[
\|(P_h(\hu_{*,h}) - P_h(\hu_h))v_h\|_{H^k} \le  
c \|v_h\|_{W^{k,\infty}} \|u_{*,h}\|_{W^{k,\infty}} \|u_{*,h}- u_h\|_{H^k}.
\]
and
\[
\|(P_h(\hu_{*,h}) - P_h(\hu_h))v_h\|_{L^1} \le c \|v_h\| \|u_{*,h}- u_h\|.
\]
\end{lemma}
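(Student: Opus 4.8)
The plan is to reduce the discrete Lipschitz estimate for $P_h = \cI_h P$ to the already-established continuous estimate \eqref{lemma:lip_p} together with the stability of nodal interpolation. Concretely, I would write
\[
(P_h(\hu_{*,h}) - P_h(\hu_h))v_h = \cI_h\big( (P(\hu_{*,h}) - P(\hu_h)) v_h \big),
\]
so that the question becomes controlling the $H^k$-norm of a nodal interpolant of an elementwise-smooth (in fact elementwise-polynomial, once we expand via \eqref{proj_cont_diff}) vector field. For $k=0$, nodal interpolation is $L^2$-stable on piecewise-polynomial functions in the sense of Lemma~\ref{la:interpol_rational}-type compactness arguments, or more directly: $\cI_h$ maps a continuous elementwise polynomial of degree $\le 2$ on each $K$ to a piecewise linear function with the same nodal values, and on each element this is a bounded operation with constant depending only on the reference element and the polynomial degree. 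Hence $\|\cI_h w\|_{L^2(K)} \le c\|w\|_{L^2(K)}$ for $w|_K \in P_2(K)^m$, and summing over $K$ gives $\|\cI_h w\| \le c\|w\|$. Applying this with $w = (P(\hu_{*,h}) - P(\hu_h))v_h$ (which is elementwise of degree $\le 2$ since $\hu_{*,h}, \hu_h$ need not be polynomial — here I must be slightly careful, see below) and then invoking \eqref{lemma:lip_p} with $u = \hu_{*,h}$, $\tu = \hu_h$, followed by Lemma~\ref{la:normalize_lip} to pass from $\|\hu_{*,h} - \hu_h\|$ back to $\|u_{*,h} - u_h\|$, yields the $L^2$ (hence $L^1$ after H\"older on the bounded domain, with the sharper $\|v_h\|$ instead of $\|v_h\|_{L^\infty}$) bound.

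For $k=1$ the approach is the same but one extra ingredient is needed: since $\cI_h w$ is piecewise linear while $w$ is only elementwise $H^1$, I would estimate $\|\nabla \cI_h w\|_{L^2(K)}$ by first subtracting the elementwise mean $\alpha_K$ of $w$, using that $\cI_h$ reproduces constants, then applying the inverse estimate \eqref{eq:inv_est_der} and the $L^2$-stability of $\cI_h - (\text{mean})$:
\[
\|\nabla \cI_h w\|_{L^2(K)} \le c h^{-1}\|\cI_h(w - \alpha_K)\|_{L^2(K)} \le c h^{-1}\|w - \alpha_K\|_{L^2(K)} \le c\|\nabla w\|_{L^2(K)},
\]
exactly as in the proof of Lemma~\ref{la:interpol_rational}. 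Summing over $K$ gives $\|\cI_h w\|_{H^1} \le c\|w\|_{H^1_h}$ (elementwise $H^1$), and then \eqref{lemma:lip_p} with $k=1$ together with Lemma~\ref{la:normalize_lip} with $k=1$ finishes the bound, the norm $\|u_{*,h}\|_{W^{1,\infty}}$ appearing because \eqref{lemma:lip_p} produces $\|\tu\|_{W^{1,\infty}}$ and the normalization estimate of Lemma~\ref{norm_ests} (or Lemma~\ref{la:normalize_lip}) converts $\|\hu_h\|_{W^{1,\infty}}$ into a bound in terms of $\|u_h\|_{W^{1,\infty}}$; absorbing the constants gives the stated right-hand side. (If one wants literally $\|u_{*,h}\|_{W^{k,\infty}}$ and not a symmetrized version, expand $-(P(\hu_{*,h})-P(\hu_h)) = \hat e\hat e^\transp + \hat e\hu_h^\transp + \hu_{*,h}\hat e^\transp$ with $\hat e = \hu_{*,h}-\hu_h$ and place the derivative/$L^\infty$-norm on the $\hu_{*,h}$ factor where it appears, using $|\hu_h| = 1$ on the others.)

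The main obstacle I anticipate is the elementwise-polynomial structure: $(P(\hu_{*,h}) - P(\hu_h))v_h$ is not a polynomial on each element because $\hu_{*,h} = u_{*,h}/|u_{*,h}|$ is a rational, not polynomial, function. There are two clean ways around this. The first is to observe that $\cI_h$ only sees nodal values, so $\cI_h((P(\hu_{*,h})-P(\hu_h))v_h) = \cI_h((P(\hat u_{*,h}^{\mathrm{pw}}) - P(\hat u_h^{\mathrm{pw}}))v_h)$ where $\hat u^{\mathrm{pw}}$ is the piecewise-linear interpolant agreeing with $\hu$ at the nodes — but these agree at nodes, so one may freely replace the normalized fields by their piecewise-linear interpolants before interpolating, making the argument genuinely about piecewise polynomials and legitimizing the compactness/scaling stability of $\cI_h$ exactly in the form used in Lemma~\ref{la:interpol_rational}. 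The second, cleaner route avoids normalization entirely: use the identity $P(\hu) = P(u/|u|)$ and the algebraic fact that $P(u/|u|) = I - uu^\transp/|u|^2$, so that $(P(\hu_{*,h}) - P(\hu_h))v_h$ is a rational expression with denominators $|u_{*,h}|^2, |u_h|^2$ bounded between $1/4$ and $4$; then the $L^p$-stability of $\cI_h$ on such rational expressions is precisely Lemma~\ref{la:interpol_rational} (applied after clearing to a common setting), and combined with the continuous identity \eqref{proj_cont_diff} and Lemma~\ref{la:normalize_lip} one reads off the claim. I would go with the first route in the write-up since it most directly recycles \eqref{lemma:lip_p} and keeps the bookkeeping of the $W^{k,\infty}$-factors transparent; the $L^1$ estimate is then just the $k=0$ estimate followed by $\|\cdot\|_{L^1} \le |\O|^{1/2}\|\cdot\|$ and Cauchy--Schwarz to expose $\|v_h\|$ rather than $\|v_h\|_{L^\infty}$.
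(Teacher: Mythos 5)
Your proposal is correct and essentially matches the paper's argument: replace $\hat u$ by its nodal interpolant $\cI_h\hat u$ (this is exact, since $P_h(\hat u)v_h = \cI_h(P(\hat u)v_h)$ depends only on nodal values, where $\hat u$ and $\cI_h\hat u$ agree), apply the $W^{k,p}$-stability of $\cI_h$ on elementwise-polynomial expressions supplied by Lemma~\ref{la:interpol_rational}, invoke the continuous local Lipschitz estimate~\eqref{lemma:lip_p}, and then use Lemmas~\ref{la:interpol_rational} and~\ref{la:normalize_lip} to trade $\|\cI_h\hat u_{*,h}\|_{W^{k,\infty}}$ and $\|\cI_h(\hat u_{*,h}-\hat u_h)\|_{H^k}$ for $\|u_{*,h}\|_{W^{k,\infty}}$ and $\|u_{*,h}-u_h\|_{H^k}$. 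Your ``first route'' around the non-polynomial nature of $\hat u$ is exactly what the paper does.

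The one step to tighten is the $L^1$ estimate. You cannot obtain it by taking the \emph{finished} $k=0$ bound and then applying $\|\cdot\|_{L^1}\le|\O|^{1/2}\|\cdot\|_{L^2}$: that chain would still carry the unwanted $\|v_h\|_{L^\infty}$ factor, and Cauchy--Schwarz cannot be used after the $L^2$-norm has been computed. What is needed, and what the paper does, is to return to the pointwise identity~\eqref{proj_cont_diff}, obtaining $|(P(\cI_h\hat u_{*,h})-P(\cI_h\hat u_h))v_h|\le c\,|\cI_h(\hat u_{*,h}-\hat u_h)|\,|v_h|$ (using $|\cI_h\hat u_{*,h}|,|\cI_h\hat u_h|\le 1$, which holds since the Euclidean norm of a piecewise-linear field is maximized at the nodes), apply H\"older with exponents $(2,2)$ \emph{inside} the $L^1$-norm to split the product, and only then invoke the $L^1$-stability of $\cI_h$ from Lemma~\ref{la:interpol_rational} together with Lemma~\ref{la:normalize_lip}. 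Also a trivial slip: after the replacement $\hat u \mapsto \cI_h\hat u$ the inner expression is elementwise cubic, not quadratic; this does not affect the stability argument.
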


\begin{proof}
Noting that, e.g., $P_h (\tu_{*,h})v_h = \cI_h (P(\cI_h \tu_{*,h})v_h)$,
we deduce from Lemma~\ref{la:interpol_rational} that 
\[\begin{split}
\|(P_h(\hu_{*,h}) - P_h(\hu_h))v_h\|_{W^{k,p}} 
&=  \|\cI_h[(P(\cI_h \hu_{*,h}) - P(\cI_h \hu_h))v_h]\|_{W^{k,p}} \\
&\le c \|(P(\cI_h \hu_{*,h}) - P(\cI_h \hu_h))v_h\|_{W^{k,p}}.
\end{split}\]
With~\eqref{lemma:lip_p} we thus find that  
\[
\|(P_h(\hu_{*,h}) - P_h(\hu_h))v_h\|_{H^k} 
\le c \|v_h \|_{W^{k,\infty}} \|\cI_h \hu_{*,h} \|_{W^{k,\infty}} \|\cI_h (\hu_{*,h}-\hu_h)\|_{H^k}.
\]
Using Lemmas~\ref{la:interpol_rational} and~\ref{la:normalize_lip}
we verify the first estimate. The estimate in $L^1$ is obtained similarly
using a H\"older inequality, the bound~\eqref{proj_cont_diff}, and
Lemmas~\ref{la:interpol_rational} and~\ref{la:normalize_lip}.
\end{proof}

Our third result is the $W^{1,p}$ stability of $P_h$. 

\begin{lemma}[Stability]\label{lemma:stab_ph}
For $u_h \in V_h$ with $1/2\le |u_h|\le 2$ let $\hu_h = N(u_h)$.
Then for $p \in \{2,\infty\}$ we have for every $v_h\in V_h$ 
\[
\|P_h(\hu_h) v_h\|_{W^{1,p}} \le c \|v_h\|_{W^{1,p}} \|u_h\|_{W^{1,\infty}}^2.
\]
\end{lemma}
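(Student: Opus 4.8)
The plan is to reduce the $W^{1,p}$ stability of $P_h(\hu_h)=\cI_h P(\hu_h)$ to the already-established nodal-interpolation stability on rational expressions (Lemma~\ref{la:interpol_rational}) together with the normalization bound (Lemma~\ref{norm_ests}). First I would observe that, since $\hu_h = N(u_h) = u_h/|u_h|$ with $u_h|_K$ linear on every $K\in\cT_h$, the expression $P(\hu_h)v_h = v_h - \hu_h(\hu_h\cdot v_h) = v_h - u_h(u_h\cdot v_h)/|u_h|^2$ is, on each element, a rational function whose numerator is elementwise polynomial and whose denominator is $|u_h|^2$, bounded between $1/4$ and $4$. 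Hence Lemma~\ref{la:interpol_rational} applies with $q_h = u_h$ (or $u_h\otimes u_h$ suitably arranged so that the denominator has the required form) and gives
\[
\|P_h(\hu_h)v_h\|_{W^{1,p}} = \Big\|\cI_h\Big(v_h - \frac{u_h(u_h\cdot v_h)}{|u_h|^2}\Big)\Big\|_{W^{1,p}}
\le c\,\Big\|v_h - \frac{u_h(u_h\cdot v_h)}{|u_h|^2}\Big\|_{W^{1,p}} = c\,\|P(\hu_h)v_h\|_{W^{1,p}}.
\]
It then remains to bound $\|P(\hu_h)v_h\|_{W^{1,p}}$ by $c\,\|v_h\|_{W^{1,p}}\|u_h\|_{W^{1,\infty}}^2$.

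For the $L^p$ part this is immediate since $|P(\hu_h)|\le 1$ pointwise, so $\|P(\hu_h)v_h\|_{L^p}\le\|v_h\|_{L^p}$. For the gradient, I would use the product rule elementwise: $\nabla\big(P(\hu_h)v_h\big) = P(\hu_h)\nabla v_h + (\nabla P(\hu_h))v_h$, where the first term is controlled by $\|\nabla v_h\|_{L^p}$ and the second by $\|\nabla P(\hu_h)\|_{L^\infty}\|v_h\|_{L^p}$. From the identity $P(\hu_h) = I - \hu_h\hu_h^\transp$ one gets $\nabla P(\hu_h)$ as a sum of terms of the form $(\nabla\hu_h)\hu_h^\transp$ and $\hu_h(\nabla\hu_h)^\transp$, so $\|\nabla P(\hu_h)\|_{L^\infty}\le c\|\nabla\hu_h\|_{L^\infty}$, and Lemma~\ref{norm_ests} (first estimate) gives $\|\nabla\hu_h\|_{L^\infty}\le c\|\nabla u_h\|_{L^\infty}$. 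Combining, $\|\nabla(P(\hu_h)v_h)\|_{L^p}\le c\,\|v_h\|_{W^{1,p}}(1+\|\nabla u_h\|_{L^\infty})$; absorbing the lower-order factor into the (larger) power $\|u_h\|_{W^{1,\infty}}^2$ — which is $\ge|u_h|^2\ge 1/4$ pointwise, hence bounded below — yields the claimed bound.

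The only mild technical point is matching the shape of the rational expression to the hypotheses of Lemma~\ref{la:interpol_rational}, which as stated requires the form $r_h/|q_h|$ with $q_h$ vector-valued; since $P(\hu_h)v_h = v_h - u_h(u_h\cdot v_h)/|u_h|^2$ has denominator $|u_h|^2 = |q_h|^2$ rather than $|q_h|$, one either invokes a trivial variant of that lemma with $|q_h|^2$ in place of $|q_h|$ (the compactness argument in its proof is unchanged) or rewrites the quotient componentwise as $\big(u_h(u_h\cdot v_h)/|u_h|\big)/|u_h|$ and applies the lemma twice. I expect this bookkeeping to be the main — and only — obstacle; everything else is the elementary product-rule estimate above together with the cited lemmas.
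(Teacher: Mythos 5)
Your proposal is essentially correct and follows the same overall strategy as the paper: reduce via the interpolation stability of Lemma~\ref{la:interpol_rational}, then estimate the continuous term by the product rule together with the normalization bound from Lemma~\ref{norm_ests}. The one place where the paper's argument is tidier is exactly the ``mild technical point'' you flag. Rather than apply Lemma~\ref{la:interpol_rational} to the genuine rational expression $P(\hu_h)v_h = v_h - u_h(u_h\cdot v_h)/|u_h|^2$ (whose denominator $|u_h|^2$ does not match the lemma's $|q_h|$), the paper first observes that, since $\cI_h$ only sees nodal values and $\hu_h(z)=(\cI_h\hu_h)(z)$ at every node, one has $P_h(\hu_h)v_h = \cI_h\bigl(P(\cI_h\hu_h)v_h\bigr)$. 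The argument of the outer $\cI_h$ is then an elementwise polynomial, so Lemma~\ref{la:interpol_rational} applies with no denominator at all, and the product rule yields a factor $\|\cI_h\hu_h\|_{W^{1,\infty}}^2$, which is then bounded by $c\|u_h\|_{W^{1,\infty}}^2$ using Lemmas~\ref{la:interpol_rational} and~\ref{norm_ests} once more. Of your two proposed fixes, the first (extend Lemma~\ref{la:interpol_rational} to a denominator $|q_h|^2$; the compactness argument in its proof is unaffected) is fine, so your route is sound. The second --- rewriting the quotient as $\bigl(u_h(u_h\cdot v_h)/|u_h|\bigr)/|u_h|$ and ``applying the lemma twice'' --- does not go through as stated, because the intermediate numerator $u_h(u_h\cdot v_h)/|u_h|$ is not elementwise polynomial and therefore cannot play the role of $r_h$ in Lemma~\ref{la:interpol_rational}.
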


\begin{proof}
We note that $P_h (\tu_h)v_h = \cI_h (P(\cI_h \tu_h)v_h)$ and 
Lemma~\ref{la:interpol_rational}  verify that
\[
\|P_h(\hu_h) v_h\|_{W^{1,p}} 
\le c \|P(\cI_h \hu_h) v_h\|_{W^{1,p}} \le c \| \cI_h \hu_h\|_{W^{1,\infty}}^2 \|v_h\|_{W^{1,p}}.
\]
The application of Lemmas~\ref{la:interpol_rational} and~\ref{la:normalize_lip}
proves the result. 
\end{proof}


\section{Consistency estimates}\label{sec:consistency}
We derive in this section consistency estimates for the fully
discrete scheme under suitable regularity conditions adapting 
the approach from~\cite{AFKL2021}.

\subsection{Consistency bound}
Letting $u$ be a regular solution of~\eqref{hhf_strong} and using the mean-preserving
Ritz projection $R_h$ and the normalization operator $N$ we define for $n=0,1,\dots,N$ 
\[ 
u_*^n = u(t_n), \quad u_{*,h}^n = R_h u_*^n,\quad \hat{u}_{*,h}^n =N(u_{*,h}^{n-1}).
\]
For $u\in C^0(0,T,W^{2,\infty}(\O))$, using~\eqref{ritz_max_norm}, we estimate 
\[
\| |u_{*,h}^n|-1\|_{L^\infty} \le \| u_{*,h}^n - u_*^n\|_{L^\infty} \le ch \|u\|_{C^0(I_n,W^{2,\infty})}.
\]
With this we deduce the uniform upper and lower bounds  
\begin{equation} \label{eq:upper and lower bound on Ritz}
1/2 \le |u_{*,h}^n(x)| \le 2,
\end{equation}
for $n=0,1,\dots,N$ with $h$ sufficiently small. This implies that $\hat{u}_{*,h}^n$ is well 
defined. Both upper and lower bounds will be used repeatedly.
We define the full discretization defect $d_h^n \in T_h(\hat{u}_{*,h}^n)$ via
\begin{equation}\label{def_defect}
(d_h^n,\phi_h)  = (d_t u_{*,h}^n,\phi_h) + (\nabla u_{*,h}^n,\nabla\phi_h)
\end{equation}
for all $\phi_h\in T_h(\hat{u}_{*,h}^n)$.

\begin{lemma}[Consistency] \label{consistency}
Assume that the solution $u$ of~\eqref{hhf_strong} satisfies~\eqref{eq:reg_cond}.
We then have 
\[ 
\|d_h^n\| \le c(h+\tau).
\]
\end{lemma}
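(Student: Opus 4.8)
The plan is to estimate $\|d_h^n\|$ by testing the defining relation \eqref{def_defect} with a suitable $\phi_h \in T_h(\hat u_{*,h}^n)$ that is close to $d_h^n$, and to compare the discrete equation against the PDE $\partial_t u = P(u)\Delta u$ evaluated at $t_n$. First I would rewrite the right-hand side of \eqref{def_defect} by inserting the exact solution: using $(\nabla u_{*,h}^n,\nabla\phi_h) = (\nabla R_h u(t_n),\nabla\phi_h) = -(\Delta u(t_n),\phi_h)$ (by the Ritz-projection identity and the Neumann boundary condition on $u$), the defect becomes
\[
(d_h^n,\phi_h) = (d_t u_{*,h}^n - \Delta u(t_n),\phi_h)
= (d_t u_{*,h}^n - \partial_t u(t_n),\phi_h) + (\partial_t u(t_n) - \Delta u(t_n),\phi_h).
\]
For the first bracket I would split $d_t u_{*,h}^n - \partial_t u(t_n) = d_t(R_h u(t_n) - u(t_n)) + (d_t u(t_n) - \partial_t u(t_n))$; the first piece is controlled by $c h\,\|\partial_t u\|_{L^2_{\mathrm{av}}(I_n,H^2)}$ via the $H^1$ Ritz estimate \eqref{ritz_max_standard} applied to $d_t$ (commuting $R_h$ with the difference quotient), and the second by $c\tau\,\|\partial_t^2 u\|_{L^2_{\mathrm{av}}(I_n,L^2)}$ via \eqref{est_diff_quot}. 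This gives $O(h+\tau)$ times a test function in $L^2$.

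The second bracket is the one that requires the discrete tangential structure, since $\partial_t u(t_n) - \Delta u(t_n) = -(P(u(t_n)) - I)\Delta u(t_n) = -(u(t_n)\cdot \Delta u(t_n))\,u(t_n)$ does not vanish — only its $P(u(t_n))$-projection does. The idea is that $\phi_h \in T_h(\hat u_{*,h}^n)$ is \emph{almost} orthogonal to $u(t_n)$: we have $\cI_h(\phi_h \cdot \hat u_{*,h}^n) = 0$, and $\hat u_{*,h}^n = N(R_h u(t_{n-1}))$ is close to $u(t_n)$ in $L^\infty$ up to $O(h+\tau)$ by \eqref{ritz_max_norm}, Lemma~\ref{la:normalize_lip}, and the time-continuity of $u$. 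So I would write, for the nonvanishing part $w = (\partial_t u(t_n) - \Delta u(t_n))$,
\[
(w,\phi_h) = (w - P(\hat u_{*,h}^n)w, \phi_h) + (P(\hat u_{*,h}^n)w, \phi_h),
\]
bound the first term by $\|(P(u(t_n)) - P(\hat u_{*,h}^n))\Delta u(t_n)\|\,\|\phi_h\| \le c(h+\tau)\|\phi_h\|$ using \eqref{lemma:lip_p} (and $P(u(t_n))w = 0$), and handle the second term by exploiting the discrete orthogonality: since $P(\hat u_{*,h}^n)w$ is tangential at the nodes of $\hat u_{*,h}^n$ only pointwise, one compares $(P(\hat u_{*,h}^n)w,\phi_h)$ with the lumped inner product $(\,\cdot\,,\phi_h)_h$ and uses $(P(\hat u_{*,h}^n)w, \phi_h)_h = (\cI_h(P(\hat u_{*,h}^n)w)\cdot\phi_h, 1)_h$ — but this still isn't literally zero unless $P(\hat u_{*,h}^n)w \in V_h$. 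The cleanest route is instead to absorb this into the defect's own structure: recall $d_h^n \in T_h(\hat u_{*,h}^n)$, so we may take $\phi_h$ to be a good approximation of $d_h^n$ and use the discrete quadrature/interpolation error between $(\cdot,\cdot)$ and $(\cdot,\cdot)_h$, which is $O(h^2)$ on each element by standard estimates. I would compute the $L^2$ norm of $d_h^n$ directly via $\|d_h^n\|^2 = (d_h^n, d_h^n)$, substitute $\phi_h = d_h^n$ into all the above bounds, and divide out one factor of $\|d_h^n\|$.

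The main obstacle I expect is the treatment of $(P(\hat u_{*,h}^n)w,\phi_h)$ for $\phi_h = d_h^n$: making precise that the \emph{nodal} (rather than $L^2$-averaged) orthogonality still suffices to gain a power of $h$ or $\tau$ here. The key realization is that $P(\hat u_{*,h}^n)w$ need not be used at all — one instead writes $(w,\phi_h) = ((I - \cI_h P(\hat u_{*,h}^n))w,\phi_h) + (\cI_h(P(\hat u_{*,h}^n)w),\phi_h)$, bounds the first summand using Lemma~\ref{lemma:approx_ph}-type interpolation estimates plus \eqref{lemma:lip_p} (giving $O(h+\tau)$ since $\|(P(u(t_n)) - \cI_hP(\hat u_{*,h}^n))w\| \le \|(P(u(t_n))-P(\hat u_{*,h}^n))w\| + \|(I-\cI_h)(P(\hat u_{*,h}^n)w)\|$), and for the second summand uses that $\cI_h(P(\hat u_{*,h}^n)w) \in V_h$ and that its nodal values are tangential at the nodes of $\hat u_{*,h}^n$, so $(\cI_h(P(\hat u_{*,h}^n)w),\phi_h)_h = 0$ and the $L^2$--$h$ quadrature difference $|(v_h,\phi_h) - (v_h,\phi_h)_h| \le ch\|v_h\|_{H^1}... $ — actually $\le ch\|\nabla v_h\|\|\phi_h\| + ...$, here requiring an $h$-weighted bound on $\nabla\cI_h(P(\hat u_{*,h}^n)w)$, controlled via an inverse estimate and Lemma~\ref{la:interpol_rational}, together with the $W^{1,\infty}$-regularity of $u$ in \eqref{eq:reg_cond}. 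Once this quadrature gain of one power of $h$ is secured, collecting all contributions $O(h+\tau)$ and using \eqref{eq:upper and lower bound on Ritz} to keep all constants bounded yields $\|d_h^n\| \le c(h+\tau)$.
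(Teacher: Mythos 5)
Your overall strategy is the right one and matches the paper's: test the defect equation with $\phi_h = d_h^n$, use the Ritz identity $(\nabla u_{*,h}^n,\nabla\phi_h) = -(\Delta u(t_n),\phi_h)$ to write $(d_h^n,\phi_h) = (d_t u_{*,h}^n - \Delta u(t_n),\phi_h)$, split off the time-discretization error, and argue that the remaining term $(D^n,\phi_h)$ with $D^n = \partial_t u(t_n)-\Delta u(t_n)$ is small because $D^n$ is ``normal'' to $u_*^n$ while $\phi_h$ is nodally tangential. Your treatment of the first bracket $d_t u_{*,h}^n - \partial_t u(t_n)$ is correct (indeed more careful than the paper, which drops the $O(h)$ Ritz contribution $d_t(R_hu-u)$ from the display, though this does not affect the result).

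However, your handling of $(D^n,\phi_h)$ has a genuine sign/identity error. Writing $w=D^n$, you split $(w,\phi_h) = (w - P(\hat u_{*,h}^n)w,\phi_h) + (P(\hat u_{*,h}^n)w,\phi_h)$ and claim the \emph{first} term is $O(h+\tau)$ by the local Lipschitz estimate. But since $P(u_*^n)w=0$ we have $(I-P(u_*^n))w = w$, so $(I-P(\hat u_{*,h}^n))w = w + (P(u_*^n)-P(\hat u_{*,h}^n))w$; this is $w$ itself up to a small correction, not small. It is the \emph{second} term $P(\hat u_{*,h}^n)w = (P(\hat u_{*,h}^n)-P(u_*^n))w$ that is $O(h+\tau)$ by~\eqref{lemma:lip_p}. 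Your refined split $(w,\phi_h) = ((I-\cI_hP(\hat u_{*,h}^n))w,\phi_h) + (\cI_h(P(\hat u_{*,h}^n)w),\phi_h)$ inherits the same problem, and in addition the claim $(\cI_h(P(\hat u_{*,h}^n)w),\phi_h)_h = 0$ is false: the nodal values of $\cI_h(P(\hat u_{*,h}^n)w)$ are $P(\hat u_{*,h}^n(z))w(z)$, which are \emph{tangential} at $\hat u_{*,h}^n(z)$, exactly like $\phi_h(z)$; two tangential vectors have a nonzero inner product in general. The lumped product would only vanish if one of the arguments were nodally \emph{normal}, e.g.~$\cI_h((I-P(\hat u_{*,h}^n))w)$.

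The paper avoids all of this and the quadrature estimates you invoke by projecting $\phi_h$ instead of $w$: since $\phi_h \in T_h(\hat u_{*,h}^n)$, the identity $P_h(\hat u_{*,h}^n)\phi_h = \phi_h$ holds exactly (both sides are piecewise linear with identical nodal values), so $(D^n,\phi_h) = (D^n, P_h(\hat u_{*,h}^n)\phi_h)$. Swapping $P_h$ for $P$ costs $(D^n,(P_h-P)(\hat u_{*,h}^n)\phi_h)$, which is $O(h)\|\phi_h\|$ by Lemma~\ref{lemma:approx_ph} plus the inverse estimate. The remainder $(D^n,P(\hat u_{*,h}^n)\phi_h)$ equals, by symmetry of $P$, $((P(\hat u_{*,h}^n)-P(u_*^n))D^n,\phi_h)$, which is $O(h+\tau)\|\phi_h\|$ by~\eqref{lemma:lip_p} together with the Ritz and time-continuity estimates. (The paper actually packages everything with $D_h^n = d_t u_{*,h}^n - \Delta u_*^n$ and peels off $P_h-P$, then $D_h^n - D^n$, then $P(\hat u_{*,h}^n)-P(u_*^n)$ in one decomposition, but the mechanism is the same.) To repair your proof, replace the split of $w$ by the insertion of $P_h\phi_h=\phi_h$ as above and discard the $(\cdot,\cdot)_h$ quadrature argument entirely.
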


\begin{proof}
Letting $D_h^n = d_t u_{*,h}^n - \Delta u_*^n$ the definition of $u_{*,h}^n$ shows
\[
(d_h^n,\phi_h) = (D_h^n,\phi_h)
\]
for all $\phi_h \in T_h(\hat{u}_{*,h}^n)$. We abbreviate
$D^n = \p_t u(t_n) - \Delta u(t_n)$ and use that 
$P(u_*^n) D^n  = 0$ with the symmetric matrix $P(u_*^n)$ to infer that 
\[\begin{split}
(d_h^n,\phi_h) &= (D_h^n ,(P_h(\hu_{*,h}^n) - P(\hu_{*,h}^n)) \phi_h) \\
& \quad + (D_h^n - D^n,P(\hu_{*,h}^n) \phi_h)
+ (D^n ,(P(\hu_{*,h}^n) - P(u_*^n)) \phi_h).
\end{split}\]
Choosing $\phi_h = d_h^n$ leads to 
\[\begin{split}
\|d_h^n\|^2 & \le \|D_h^n\| \|(P_h(\hu_{*,h}^n) - P(\hu_{*,h}^n)) d_h^n\| \\
& \quad + \|D_h^n - D^n \| \|d_h^n\|
+ \|D^n\|_{L^\infty} \|P(\hu_{*,h}^n) - P(u_*^n)\| \|d_h^n\|.
\end{split}\]
Using~\eqref{stab_diff_quot} and $W^{1,\infty}$ stability for $R_h$, we have
\begin{equation}\label{est_dhn}
\| D_h^n\|_{L^\infty}  \le c(\|\p_t u\|_{C^0(I_n,W^{1,\infty})}  + \|\Delta u\|_{C^0(I_n,L^\infty)}). 
\end{equation}
With Lemma~\ref{lemma:approx_ph} and the inverse estimate~\eqref{eq:inv_est_der},
we find that 
\[
\|(P_h(\hu_{*,h}^n) - P(\hu_{*,h}^n) d_h\|
\le c  h \|d_h^n\|  \|\nabla u_*^n\|_{L^\infty}^2  \le c h \|d_h^n\| \|u\|_{C^0(I_n,W^{1,\infty})}^2.
\]
We next note that~\eqref{est_diff_quot} implies
\[
\|D_h^n - D^n\| = \|d_t u_{*,h}^n - \p_t u (t_n) \| \le \tau \|\p_t^2 u\|_{C^0(I_n,L^2)}.
\]
Furthermore, we have
\[
\|D^n\|_{L^\infty} \le \|\p_t u\|_{C^0(I_n,L^\infty)} + \|u\|_{C^0(I_n,W^{2,\infty})}.
\]
Finally, we use that $|u_*^{n-1}|=1$ to deduce with~\eqref{proj_cont_diff}
that
\[\begin{split}
\|P(\hu_{*,h}^n) - P(u_*^n)\| &\le c \big(\|N(u_{*,h}^{n-1}) - N(u_*^{n-1})\| + \|u_*^{n-1} - u_*^n\|\big) \\
&\le c \big(\|u_{*,h}^{n-1} - u_*^{n-1}\| + \|u_*^{n-1} - u_*^n\|\big) \\
&\le c  \big( h \|u\|_{C^0(I_n,H^2)} + \tau \|\p_t u\|_{C^0(I_n,L^2)}\big).
\end{split}\]
A combination of the estimates proves the result. 
\end{proof} 

\subsection{Residual estimate}
The residual measures the violation of the numerical scheme by the Ritz projections
of the exact solution relative to the orthogonality constraint defined by
the numerical solution, i.e., we define $r_h^n \in T_h(\hu_h^n)$ via
\begin{equation}\label{residual}
(r_h^n,\phi_h) = (d_t u_{*,h}^n,\phi_h) + (\nabla u_{*,h}^n,\nabla \phi_h)
\end{equation}
for all $\phi_h\in T_h(\hu_h^n)$. Note that the defect $d_h^n$
defined in~\eqref{def_defect} belongs to
the space $T_h(\hu_{*,h}^n)$. The following lemma controls the difference.

\begin{lemma}[Residual]\label{res_estimate}
Assume that the solution $u$ of~\eqref{hhf_strong} satisfies~\eqref{eq:reg_cond}
and that $1/2 \le |u_h^{n-1}|\le 2$.  We then have that 
\[
\|r_h^n\| \le c \big(\|d_h^n\| + \|u_{*,h}^{n-1} - u_h^{n-1}\|\big).
\]
\end{lemma}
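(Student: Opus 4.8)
The residual $r_h^n$ and the defect $d_h^n$ satisfy the same right-hand side functional $\phi_h \mapsto (d_t u_{*,h}^n,\phi_h) + (\nabla u_{*,h}^n,\nabla\phi_h)$, but are tested against, and live in, \emph{different} discrete tangent spaces: $d_h^n \in T_h(\hu_{*,h}^n)$ while $r_h^n \in T_h(\hu_h^n)$. The natural strategy is therefore to compare the two by inserting the discrete projections $P_h(\hu_h^n)$ and $P_h(\hu_{*,h}^n)$ and exploiting that these are self-adjoint with respect to $(\cdot,\cdot)_h$ (Lemma~\ref{lemma:projection properties}). First I would choose $\phi_h = r_h^n \in T_h(\hu_h^n)$ in \eqref{residual}, so that $\|r_h^n\|^2 = (d_t u_{*,h}^n,r_h^n) + (\nabla u_{*,h}^n,\nabla r_h^n)$. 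Since $d_h^n$ is defined by the same bilinear expression but only for test functions in $T_h(\hu_{*,h}^n)$, I would write $r_h^n = P_h(\hu_{*,h}^n) r_h^n + \big(r_h^n - P_h(\hu_{*,h}^n) r_h^n\big)$. Wait --- the $(\cdot,\cdot)_h$-projection is the clean one, so more precisely I would decompose using $(\cdot,\cdot)_h$: the term $P_h(\hu_{*,h}^n) r_h^n$ lies in $T_h(\hu_{*,h}^n)$ and can be fed into the defect relation \eqref{def_defect}, replacing the $d_t u_{*,h}^n$ and $\nabla u_{*,h}^n$ contributions by $(d_h^n, P_h(\hu_{*,h}^n) r_h^n)$, which is bounded by $\|d_h^n\|\,\|P_h(\hu_{*,h}^n) r_h^n\| \le c\|d_h^n\|\,\|r_h^n\|$ by the $L^2$-stability of $P_h$ (which follows from Lemma~\ref{la:interpol_rational}).

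**The main difficulty.** The obstacle is the remaining piece, namely the contribution of $r_h^n - P_h(\hu_{*,h}^n) r_h^n$, equivalently the "mismatch" between testing in $T_h(\hu_h^n)$ versus $T_h(\hu_{*,h}^n)$. Here I would use that $r_h^n \in T_h(\hu_h^n)$ means $P_h(\hu_h^n) r_h^n = r_h^n$ in the appropriate sense, so $r_h^n - P_h(\hu_{*,h}^n) r_h^n = \big(P_h(\hu_h^n) - P_h(\hu_{*,h}^n)\big) r_h^n$, and this is exactly controlled by the discrete local Lipschitz estimate, Lemma~\ref{lemma:lip_ph}, giving a bound of the form $c\|r_h^n\|\,\|u_h^{n-1} - u_{*,h}^{n-1}\|$ in $L^1$ (or $L^2$ with a $W^{k,\infty}$ factor). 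The subtlety is that the $(\cdot,\cdot)_h$ inner product is not the $L^2$ inner product, so one must be careful that the defect relation \eqref{def_defect} and the residual relation \eqref{residual} are stated with $(\cdot,\cdot)$, not $(\cdot,\cdot)_h$; the self-adjointness in Lemma~\ref{lemma:projection properties} is with respect to $(\cdot,\cdot)_h$. I expect the cleanest route avoids the $(\cdot,\cdot)_h$-projection altogether: simply write $r_h^n - d_h^n$ as the functional $(d_t u_{*,h}^n,\cdot) + (\nabla u_{*,h}^n,\nabla\cdot)$ tested against $r_h^n$ minus its value against a suitable element of $T_h(\hu_{*,h}^n)$ close to $r_h^n$, and estimate the difference using that $d_h^n \perp$-no. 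Let me restate: I would pick $\phi_h = r_h^n$ in \eqref{residual}, subtract \eqref{def_defect} applied to the correction $\psi_h := r_h^n - (P(\cI_h\hu_{*,h}^n)\,r_h^n)_{\text{nodally adjusted}}$, and absorb everything.

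**Cleaner execution.** Concretely: since $r_h^n\in T_h(\hu_h^n)$, set $\psi_h = P_h(\hu_{*,h}^n) r_h^n \in T_h(\hu_{*,h}^n)$. Then from \eqref{def_defect}, $(d_h^n,\psi_h) = (d_t u_{*,h}^n,\psi_h) + (\nabla u_{*,h}^n,\nabla\psi_h)$. Subtracting this from $\|r_h^n\|^2 = (d_t u_{*,h}^n,r_h^n) + (\nabla u_{*,h}^n,\nabla r_h^n)$ (using $\phi_h=r_h^n$ in \eqref{residual}) gives
\[
\|r_h^n\|^2 = (d_h^n,\psi_h) + (d_t u_{*,h}^n, r_h^n - \psi_h) + (\nabla u_{*,h}^n,\nabla(r_h^n - \psi_h)).
\]
The first term is $\le c\|d_h^n\|\,\|r_h^n\|$ by $L^2$-stability of $P_h$. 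For the remaining two terms, $r_h^n - \psi_h = (P_h(\hu_h^n) - P_h(\hu_{*,h}^n)) r_h^n$, which by Lemma~\ref{lemma:lip_ph} is small in $L^2$; the $\nabla$-term is handled via the inverse estimate \eqref{eq:inv_est_der} to trade a derivative for an $h^{-1}$ and then Lemma~\ref{lemma:lip_ph} in $H^1$, or more carefully by integrating by parts / using the $W^{1,\infty}$-bounds on $u_{*,h}^n$ from the Ritz stability so that $|(\nabla u_{*,h}^n, \nabla(r_h^n-\psi_h))| \le \|u_{*,h}^n\|_{W^{1,\infty}}\|r_h^n - \psi_h\|_{W^{1,1}}$ with the $W^{1,1}$-bound supplied by Lemma~\ref{lemma:lip_ph}. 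In all cases the outcome is $\le c\big(\|d_h^n\| + \|u_{*,h}^{n-1}-u_h^{n-1}\|\big)\|r_h^n\|$, and dividing by $\|r_h^n\|$ finishes the proof. I expect the delicate point to be the gradient term: one must avoid picking up an uncontrolled $h^{-1}$, which is why invoking the $W^{1,\infty}$-stability of the Ritz projection together with the $L^1$/$W^{1,1}$ version of Lemma~\ref{lemma:lip_ph} is the safe path.
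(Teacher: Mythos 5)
Your overall decomposition is the right one and matches the paper's in spirit: test \eqref{residual} with $\phi_h = r_h^n$, introduce $\psi_h = P_h(\hu_{*,h}^n)r_h^n \in T_h(\hu_{*,h}^n)$, feed it into the defect relation \eqref{def_defect}, and control the mismatch $r_h^n - \psi_h = (P_h(\hu_h^n) - P_h(\hu_{*,h}^n))r_h^n$ via Lemmas~\ref{lemma:stab_ph} and \ref{lemma:lip_ph}. However, your handling of the gradient term $(\nabla u_{*,h}^n, \nabla(r_h^n - \psi_h))$ has a genuine gap. Neither of your two proposed routes closes: the inverse-estimate route introduces an $h^{-1}$ with nothing to cancel it, and the $W^{1,1}$-route appeals to a bound that Lemma~\ref{lemma:lip_ph} does not provide. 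That lemma gives an $L^1$ estimate and an $H^k$ estimate, and its $k=1$ version requires $\|v_h\|_{W^{1,\infty}}$ on the right, which for $v_h = r_h^n$ is not under control; moreover, any version of that lemma controlling $\nabla\bigl((P_h(\hu_h) - P_h(\hu_{*,h}))r_h^n\bigr)$ would necessarily involve $\nabla r_h^n$, which cannot be reduced to $\|r_h^n\|$ without an inverse estimate — exactly the $h^{-1}$ you were trying to avoid.

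The missing idea is to eliminate the gradient term before any projection manipulations by using that $u_{*,h}^n = R_h u_*^n$ is the mean-preserving Ritz projection and that $u$ satisfies the homogeneous Neumann condition. This gives $(\nabla u_{*,h}^n, \nabla\phi_h) = (\nabla u_*^n, \nabla\phi_h) = -(\Delta u_*^n, \phi_h)$ for every $\phi_h\in V_h$, so both the residual and the defect are pure $L^2$ pairings against $D_h^n := d_t u_{*,h}^n - \Delta u_*^n$. After that, the paper bounds $(D_h^n, (P_h(\hu_h^n)-P_h(\hu_{*,h}^n))r_h^n)$ by $\|D_h^n\|_{L^\infty}$ times the $L^1$ Lipschitz estimate of Lemma~\ref{lemma:lip_ph} — exactly the duality pairing your $W^{1,\infty}$/$W^{1,1}$ instinct was reaching for, but in $L^\infty$/$L^1$ and with no gradients in sight. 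With this one change your argument goes through essentially verbatim.
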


\begin{proof}
Using the definition of $u_{*,h}^n$, noting $P_h(\hu_h^n) \phi_h = \phi_h$,
incorporating the definition of $d_h^n$, and abbreviating
$D_h^n = d_t u_{*,h}^n- \Delta u_*^n$, we have 
\[\begin{split}
(r_h^n,\phi_h) &= (D_h^n, P_h(u_{*,h}^n) \phi_h) +(D_h^n, (P_h(\hu_h^n) - P_h(\hu_{*,h}^n)) \phi_h) \\
&= (d_h^n, P_h(u_{*,h}^n) \phi_h) + (D_h^n,  (P_h(\hu_h^n) - P_h(\hu_{*,h}^n)) \phi_h).
\end{split}\]
Hence, with $\phi_h = r_h^n$ we deduce with Lemmas~\ref{lemma:lip_ph} and~\ref{lemma:stab_ph} 
that
\[
\|r_h^n\|^2 \le  c \big( \|d_h^n\|  
+  \|D_h^n\|_{L^\infty} \|u_h^{n-1} - u_{*,h}^{n-1}\|\big) \|r_h^n\| .
\]
Incorporating~\eqref{est_dhn} implies the result.  
\end{proof}

\subsection{Test function correction}
By subtracting~\eqref{residual} from~\eqref{full_discr}, 
the residuals $r_h^n$ gives rise to the error equation 
\begin{equation}\label{error}
(d_t e_h^n,\phi_h) + (\nabla e_h^n, \nabla \phi_h) = -(r_h,\phi_h)
\end{equation}
with $e_h^n = u_h^n - u_{*,h}^n$ and for all $\phi_h \in T_h(\hu_h^n)$. 
Since $d_t e_h^n$ is in general not an admissible test function, we 
follow~\cite{AFKL2021} and use 
$P_h(\hu_h^n) d_t e_h^n$. The following lemma controls the corresponding correction error. 

\begin{lemma}[Projected test function]\label{test_fn_corr}
Assume that the solution $u$ of~\eqref{hhf_strong} satisfies~\eqref{eq:reg_cond}
and that $1/2 \le |u_h^{n-1}|\le 2$. There exist functions $s_h^n,q_h^n\in V_h$ such that 
\[
(I- P_h(\hu_h^n)) d_t e_h^n = s_h^n + q_h^n 
\]
and, for $k=0,1$, 
\[
\|s_h^n\|_{H^1} \le c(h+\tau), \quad \|q_h^n\|_{H^k} \le c \|e_h^{n-1}\|_{H^k}.
\]
\end{lemma}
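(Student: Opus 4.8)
The decomposition to aim for is
\[
(I - P_h(\hu_h^n)) d_t e_h^n = s_h^n + q_h^n, \qquad
s_h^n := (I-P_h(\hu_h^n)) d_t u_h^n, \qquad
q_h^n := -(I-P_h(\hu_h^n)) d_t u_{*,h}^n,
\]
since $e_h^n = u_h^n - u_{*,h}^n$ gives $d_t e_h^n = d_t u_h^n - d_t u_{*,h}^n$. The point is that $d_t u_h^n \in T_h(\hu_h^n)$ exactly (it is the discrete tangent vector computed by the scheme), so $P_h(\hu_h^n) d_t u_h^n = d_t u_h^n$ and hence $s_h^n = 0$ — which makes the bound $\|s_h^n\|_{H^1}\le c(h+\tau)$ trivially true. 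The entire content is therefore the estimate on $q_h^n$, i.e.\ the claim that $(I-P_h(\hu_h^n)) d_t u_{*,h}^n$ is small in $H^k$, controlled by $\|e_h^{n-1}\|_{H^k}$.

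\textbf{The estimate on $q_h^n$.} The key observation is that the consistency-free part of $d_t u_{*,h}^n$ is (almost) tangential to the \emph{exact} sphere: since $|u_*^{n-1}|=1$ and the exact flow satisfies $\p_t u(t_n)\cdot u_*^n = 0$, the quantity $(I - P(\hu_{*,h}^n)) d_t u_{*,h}^n$ is of consistency order. More precisely I would write
\[
q_h^n = -(I-P_h(\hu_h^n)) d_t u_{*,h}^n
= -\big(P_h(\hu_{*,h}^n) - P_h(\hu_h^n)\big) d_t u_{*,h}^n
\;-\; (I - P_h(\hu_{*,h}^n)) d_t u_{*,h}^n.
\]
For the first term, Lemma~\ref{lemma:lip_ph} (discrete local Lipschitz estimate) together with the stability bound $\|d_t u_{*,h}^n\|_{W^{k,\infty}}\le c$ from \eqref{stab_diff_quot} and $W^{1,\infty}$-stability of $R_h$ gives a bound by $c\|u_h^{n-1} - u_{*,h}^{n-1}\|_{H^k} = c\|e_h^{n-1}\|_{H^k}$, which is exactly the desired form. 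For the second term, I would insert $P(\hu_{*,h}^n)$: write $I - P_h(\hu_{*,h}^n) = (P(\hu_{*,h}^n) - P_h(\hu_{*,h}^n)) + (I - P(\hu_{*,h}^n))$ acting on $d_t u_{*,h}^n$. The term with $P - P_h$ is $\le c h^{2-k}\|d_t u_{*,h}^n\|_{W^{1,p}}\|\nabla u_{*,h}^{n-1}\|_{L^\infty}^2 \le c h$ by Lemma~\ref{lemma:approx_ph} (plus an inverse estimate when $k=1$). The remaining term $(I - P(\hu_{*,h}^n)) d_t u_{*,h}^n = \hu_{*,h}^n (\hu_{*,h}^n \cdot d_t u_{*,h}^n)$: using $\hu_{*,h}^n = N(u_{*,h}^{n-1})$ and the exact relations $|u_*^{n-1}|=1$, $\p_t u(t_n)\cdot u_*^n=0$, one rewrites $\hu_{*,h}^n\cdot d_t u_{*,h}^n$ as a sum of differences $N(u_{*,h}^{n-1})-u_*^{n-1}$, $d_t u_{*,h}^n - \p_t u(t_n)$, and $u_*^{n-1}-u_*^n$, each $O(h+\tau)$ by \eqref{ritz_max_norm}, \eqref{ritz_max_standard}, \eqref{est_diff_quot}, and Lemma~\ref{la:normalize_lip}, with the analogous estimate in $H^1$ using $W^{1,\infty}$-stability of $R_h$. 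Collecting, the second term is $\le c(h+\tau)$; but since the lemma only asks for $\|q_h^n\|_{H^k}\le c\|e_h^{n-1}\|_{H^k}$, I would instead absorb this consistency part into $s_h^n$: redefine $s_h^n := (I - P_h(\hu_{*,h}^n)) d_t u_{*,h}^n$ (the part bounded by $c(h+\tau)$) and $q_h^n := -(P_h(\hu_{*,h}^n) - P_h(\hu_h^n)) d_t u_{*,h}^n$ (the part bounded by $c\|e_h^{n-1}\|_{H^k}$), which matches the statement exactly.

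\textbf{Main obstacle.} The delicate point is the $H^1$ (i.e.\ $k=1$) bound on the "nearly tangential" defect $(I-P(\hu_{*,h}^n))d_t u_{*,h}^n$, because differentiating $N(u_{*,h}^{n-1})\cdot d_t u_{*,h}^n$ produces terms with $\nabla d_t u_{*,h}^n$ and $\DD_h^2$-type quantities; here I would rely on $W^{1,\infty}$-stability of $R_h$ to control $\|d_t u_{*,h}^n\|_{W^{1,\infty}}$ and $\|\nabla u_{*,h}^{n-1}\|_{L^\infty}$, Lemma~\ref{norm_ests} for $\|\DD_h^2 N(u_{*,h}^{n-1})\|_{L^\infty}$, and the regularity \eqref{eq:reg_cond} ($u\in C^1([0,T],W^{1,\infty})\cap C^0([0,T],W^{2,\infty})$, $u\in C^2([0,T],H^1)$) to turn each factor into $O(h+\tau)$ in $H^1$. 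The second mild subtlety is ensuring all normalizations are well defined: this is where the hypothesis $1/2\le|u_h^{n-1}|\le 2$ and the bound \eqref{eq:upper and lower bound on Ritz} $1/2\le|u_{*,h}^{n-1}|\le 2$ enter, legitimizing the use of Lemmas~\ref{la:normalize_lip}, \ref{la:interpol_rational}, \ref{lemma:lip_ph}, and \ref{lemma:approx_ph}.
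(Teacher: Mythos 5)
Your proposal is correct and, after the initial detour, lands on exactly the paper's decomposition $s_h^n = -(I-P_h(\hu_{*,h}^n))d_tu_{*,h}^n$, $q_h^n = -(P_h(\hu_{*,h}^n)-P_h(\hu_h^n))d_tu_{*,h}^n$ (note the missing minus sign in your final redefinition of $s_h^n$, which is harmless for the norm estimates). The bound on $q_h^n$ via Lemma~\ref{lemma:lip_ph} is identical to the paper's, and your bound on $s_h^n$ uses the same ingredients (Lemma~\ref{lemma:approx_ph}, the exact orthogonality $P(u_*^n)\p_t u(t_n)=\p_t u(t_n)$, \eqref{est_diff_quot}, \eqref{ritz_max_standard}--\eqref{ritz_max_norm}, Lemma~\ref{la:normalize_lip}, and $W^{1,\infty}$-stability of $R_h$) with only a mild rebracketing: you split off $(P-P_h)(\hu_{*,h}^n)d_tu_{*,h}^n$ and then write $(I-P(\hu_{*,h}^n))d_tu_{*,h}^n = \hu_{*,h}^n(\hu_{*,h}^n\cdot d_tu_{*,h}^n)$ and expand the scalar product, whereas the paper inserts $\p_t u(t_n)$ first and then further decomposes — an equivalent rearrangement of the same estimates, so no inverse estimate is actually needed for the $k=1$ case of the approximation term since Lemma~\ref{lemma:approx_ph} already delivers the $H^1$ bound with the factor $h^{2-k}$.
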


\begin{proof}
Since $(I-P_h(\hu_h^n)) d_t u_h^n =0$  we have that
\[\begin{split}
(I-P_h(\hu_h^n)) d_t e_h^n 
&= -(I- P_h(\hu_{*,h}^n)) d_t u_{*,h}^n - (P_h(\hu_{*,h}^n)- P_h(\hu_h^n)) d_t u_{*,h}^n \\
&=: s_h^n + q_h^n. 
\end{split}\]
(i) To estimate $s_h^n$ we note that $(I-P(u_*^n))\p_t u(t_n) = 0$ and write
\[
s_h^n = (\p_t u(t_n)- d_t u_{*,h}^n ) + (P(u_*^n) \p_t u(t_n) - P_h(\hat{u}_{*,h}^n) d_t u_{*,h}^n ) =: \a+\b.
\]
We have
\[\begin{split}
\|\a\|_{H^1} &\le \| d_t u_{*,h}^n - d_t u_*^n\|_{H^1} + \|d_t u_*^n - \p_t u(t_n)\|_{H^1}\\
	&\le ch\| \p_t u\|_{C^0(I_n,H^2)} + c\tau \|\p_t^2 u\|_{C^0(I_n,H^1)}.
\end{split}\]
To estimate for $\b$ we write
\[\begin{split}
\b & = (P_h(\hat{u}_{*,h}^n) - P(\hat{u}_{*,h}^n)) d_t u_{*,h}^n 
+ P(\hat{u}_{*,h}^n)(d_t u_{*,h}^n - \p_t u(t_n)) \\
& \qquad  + (P(\hat{u}_{*,h}^n) - P(u_*^n)) \p_t u(t_n) =: \b_1 +\b_2 + \b_3.
\end{split}\]
Using Lemma \ref{lemma:approx_ph}
and the $H^1$- and $W^{1,\infty}$ stability of $R_h$ leads to 
\[\begin{split}
\|\b_1\|_{H^1} &\le ch \|d_t u_*^n\|_{H^1}\|\nabla u_{*,h}^{n-1}\|_{L^\infty}^2 \\
&\le ch \|\p_t u\|_{C^0(I_n,H^1)} \|u\|_{C^0(I_n,W^{1,\infty})}^2.
\end{split}\]
Using an $H^1$-bound for $P$, Lemma~\ref{norm_ests},
and $H^1$-stability of $R_h$ shows that 
\[\begin{split}
\|\b_2\|_{H^1} & \le c\| \hat{u}_{*,h}^n\|_{W^{1,\infty}}^2 \|d_t u_{*,h}^n - \p_t u(t_n)\|_{H^1}\\
&\le c \|u_{*,h}^{n-1}\|_{W^{1,\infty}}^2 (\|d_t u_{*,h}^n - d_t u_*^n\|_{H^1} + \|d_t u_*^n - \p_t u(t_n)\|_{H^1})\\
&\le c \|u\|_{C^0(I_n;W^{1,\infty})}^2 (h \|\p_t u\|_{C^0(I_n,H^2)} + \tau \|\p_t^2 u\|_{C^0(I_n,H^1)}),
\end{split}\]
Finally, using~\eqref{lemma:lip_p} and Lemma~\ref{la:normalize_lip}
we verify that 
\[\begin{split}
\|\b_3\|_{H^1} &\le c \|u_*^n\|_{W^{1,\infty}} \|\p_t u(t_n)\|_{W^{1,\infty}} \|\hat{u}_{*,h}^n - u_*^n\|_{H^1}\\
&\le c(\|u_{*,h}^{n-1} - u_*^{n-1}\|_{H^1} + \| u_*^{n-1} - u_*^n\|_{H^1})\\
&\le c(h\|u\|_{C^0(I_n,H^2)} +\tau \|\p_t u\|_{C^0(I_n,H^1)}).
\end{split}\]
This implies the estimate for $s_h^n$. \\
(ii) To bound $q_h^n$ we use Lemma~\ref{lemma:lip_ph} to verify that 
\[
\|q_h^n\|_{H^k} \le c \|d_t u_{*,h}^n\|_{W^{1,\infty}} \|u_{*,h}^{n-1}\|_{W^{1,\infty}} \|u_{*,h}^{n-1}- u_h^{n-1}\|_{H^k}.
\]
The $W^{1,\infty}$ stability of $R_h$ implies that 
$\| u_{*,h}^{n-1}\|_{W^{1,\infty}} \le c\|u\|_{C^0(I_n,W^{1,\infty})}$
and, incorporating~\eqref{stab_diff_quot}, imply
\[
\|d_t u_{*,h}^n\|_{W^{1,\infty}} \le c \|d_t u_*^{n-1}\|_{W^{1,\infty}} 
\le c \|\p_t u\|_{C^0(I_n,W^{1,\infty})}. 
\]
A combination of the estimates proves the estimates for $\|q_h^n\|_{H^k}$. 
\end{proof}

\begin{remark}
In the application of Lemmas~\ref{consistency},~\ref{res_estimate}, 
and~\ref{test_fn_corr} only weighted 
sums of the squares of $\|d_h^n\|$ and $\|s_h^n\|_{H^1}$ are needed, cf.~Lemma~\ref{stability}. 
Therefore, the conditions $u\in C^1([0,T],H^1(\O)\cap W^{1,\infty}(\O))$
and $u\in C^2([0,T],H^1(\O))$ can be replaced by the weaker requirements
$u\in H^1([0,T],H^1(\O)\cap W^{1,\infty}(\O))$ and $u\in H^2([0,T],H^1(\O))$,
respectively.
\end{remark}


\section{Error analysis}\label{sec:error_est}
The following lemma provides a discrete error estimate for the difference
between the numerical solutions and the Ritz projections of a regular 
solution. It results from a stability argument for the error equation~\eqref{error}.

\begin{lemma}[Error equation stability]\label{stability}
Let $(u_h^n)_{n=0,\dots,N}$ solve~\eqref{full_discr}, and define $u_{*,h}^n = R_h(u(t_n)))$ for
a solution $u$ of~\eqref{hhf_strong} satisfying~\eqref{eq:reg_cond}.
Then for $h,\tau>0$ sufficiently small, 
the discrete error $e_h^n = u_h^n - u_{*,h}^n$ satisfies 
\begin{equation}\label{stab_ineq}
\max _{n=0,\dots,N}\|e_h^n\|_{H^1}^2 \le c_{{\rm stab}} B_{h,\tau}^2,
\end{equation}
where $B_{h,\tau}$ is defined via
\[
B_{h,\tau}^2 = \|e_h^0\|_{H^1}^2+\tau \sum_{n=1}^N \big(\|d_h^n\|^2 + \|s_h^n\|_{H^1}^2 \big) ,
\]
provided that $B_{h,\tau}^2 \le c_B^2 h$ with $c_B$ sufficiently small. 
\end{lemma}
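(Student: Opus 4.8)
The plan is a discrete energy estimate for the error equation~\eqref{error}, carried out inside an induction on $n$ that simultaneously yields the bound $\|e_h^k\|_{H^1}^2 \le c_{\rm stab} B_{h,\tau}^2$ and the uniform bounds $1/2 \le |u_h^k| \le 2$ needed to invoke Lemmas~\ref{res_estimate} and~\ref{test_fn_corr}.

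For the energy step, since $d_t e_h^n$ is not admissible in~\eqref{error}, I would test with $\phi_h = P_h(\hu_h^n) d_t e_h^n = d_t e_h^n - s_h^n - q_h^n$, using the decomposition from Lemma~\ref{test_fn_corr}. Expanding and applying the binomial identity $(\nabla e_h^n, \nabla d_t e_h^n) = \frac{d_t}{2}\|\nabla e_h^n\|^2 + \frac{\tau}{2}\|\nabla d_t e_h^n\|^2 \ge \frac{d_t}{2}\|\nabla e_h^n\|^2$, one arrives at
\[
\|d_t e_h^n\|^2 + \frac{d_t}{2}\|\nabla e_h^n\|^2 \le (d_t e_h^n, s_h^n + q_h^n) + (\nabla e_h^n, \nabla(s_h^n + q_h^n)) - (r_h^n, d_t e_h^n - s_h^n - q_h^n).
\]
Every term on the right is controlled by Young's inequality, absorbing the $d_t e_h^n$-contributions into $\|d_t e_h^n\|^2$; here Lemma~\ref{test_fn_corr} supplies $\|q_h^n\|_{H^1} \le c\|e_h^{n-1}\|_{H^1}$ while $\|s_h^n\|_{H^1}$ enters explicitly, and Lemma~\ref{res_estimate} gives $\|r_h^n\| \le c(\|d_h^n\| + \|e_h^{n-1}\|)$. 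Combining with the elementary bound $d_t\|e_h^n\|^2 \le \frac12\|d_t e_h^n\|^2 + 2\|e_h^n\|^2$ to recover control of the $L^2$-part, I obtain
\[
d_t\|e_h^n\|_{H^1}^2 \le C\big(\|e_h^n\|_{H^1}^2 + \|e_h^{n-1}\|_{H^1}^2 + \|d_h^n\|^2 + \|s_h^n\|_{H^1}^2\big),
\]
valid whenever $1/2 \le |u_h^{n-1}| \le 2$ and $h$ is small enough that~\eqref{eq:upper and lower bound on Ritz} and the decomposition of Lemma~\ref{test_fn_corr} apply.

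For the induction, the case $n = 0$ is trivial once $c_{\rm stab} \ge 1$. Assuming $\|e_h^k\|_{H^1}^2 \le c_{\rm stab} B_{h,\tau}^2$ for $k = 0, \dots, n-1$, the hypothesis $B_{h,\tau}^2 \le c_B^2 h$ and the inverse estimate~\eqref{eq:inv_est_p} give $\|e_h^k\|_{L^\infty} \le c h^{-1/2}\|e_h^k\|_{H^1} \le c\sqrt{c_{\rm stab}}\, c_B$ for these $k$; choosing $c_B$ small enough (relative to $c_{\rm stab}$, which is fixed below) forces $\|e_h^k\|_{L^\infty} \le 1/4$, and since~\eqref{eq:upper and lower bound on Ritz} may be sharpened to $3/4 \le |u_{*,h}^k| \le 5/4$ for $h$ small, this gives $1/2 \le |u_h^k| \le 2$ for $k = 0, \dots, n-1$ (for $k = 0$ one uses the assumption $\|u_h^0 - u^0\|_{H^1} \le ch$ instead). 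Hence the energy estimate holds for indices $1, \dots, n$; multiplying by $\tau$, summing, and using $\|e_h^0\|_{H^1}^2 \le B_{h,\tau}^2$ together with $\tau\sum_{k=1}^n(\|d_h^k\|^2 + \|s_h^k\|_{H^1}^2) \le B_{h,\tau}^2$ leads to
\[
\|e_h^n\|_{H^1}^2 \le C' B_{h,\tau}^2 + C'\tau\sum_{k=0}^n \|e_h^k\|_{H^1}^2.
\]
For $\tau$ small the discrete Gronwall lemma then produces $\|e_h^n\|_{H^1}^2 \le c_{\rm stab} B_{h,\tau}^2$ with $c_{\rm stab}$ depending only on $C'$ and $T$, which closes the induction.

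The main obstacle is exactly this bootstrap: the uniform bounds $1/2 \le |u_h^{n-1}| \le 2$ are needed before the energy estimate can even be stated, yet they follow only from the bound being proved. The circularity is broken because $c_{\rm stab}$ is determined solely by the Gronwall constant and is therefore independent of $c_B$, after which $c_B$ is chosen small in terms of $c_{\rm stab}$ and the constant in~\eqref{eq:inv_est_p}; the condition $B_{h,\tau}^2 \le c_B^2 h$ is precisely what makes the factor $h^{-1/2}$ in~\eqref{eq:inv_est_p} harmless. For $n_\O = 2$ this factor may be replaced by $1 + |\log h|$ and for $n_\O = 1$ omitted, with a correspondingly weaker smallness requirement on $B_{h,\tau}$.
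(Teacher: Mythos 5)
Your proof is correct and follows essentially the same route as the paper: an induction that bootstraps the $L^\infty$ bound $1/2\le|u_h^{n-1}|\le 2$ from the $H^1$ estimate via the inverse inequality and the smallness of $c_B$, followed by testing the error equation with $P_h(\hu_h^n)\,d_t e_h^n$, the binomial identity, Young's inequality, recovery of the $L^2$ part, and a discrete Gronwall argument whose constant is independent of $c_B$. The only minor slip is the remark that the base case uses the hypothesis $\|u_h^0-u^0\|_{H^1}\le ch$ from Theorem~\ref{thm:error estiamate}; in fact the lemma is self-contained since $\|e_h^0\|_{H^1}^2\le B_{h,\tau}^2\le c_B^2 h$ already delivers the $L^\infty$ smallness of $e_h^0$ by the same inverse-estimate argument.
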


If $\| e_h^0\|_{H^1}\le ch$ then Lemmas~\ref{consistency} and~\ref{test_fn_corr}
imply that $B_{h,\tau} \le c (h+\tau)$ so that the condition of Lemma~\ref{stability} is
satisfied if $\tau \le c_{{\rm m}} h^{1/2}$ with $c_{\rm m}>0$ sufficiently small. 
The error estimates for the Ritz projections imply that
\[ 
\max_{n=0,\dots,N} \|u(t_n) - u_h^n\|_{H^1} \le c(h+\tau).
\]
This implies the result of Theorem~\ref{thm:error estiamate}. Moreover, 
the inverse estimate~\eqref{eq:inv_est_p} shows $1/2\le |u_h^{n-1}|\le 2$ 
and Lemma~\ref{la:normalize_lip} implies that 
for the normalized approximations $N(u_h^n)$ we have
\[
\|u(t_n) - N(u_h^n)\|_{H^1} = \|N(u(t_n)) - N(u_h^n)\|_{H^1} \le c \|u(t_n) - u_h^n\|_{H^1},
\]
so that these satisfy the same approximation properties. We note that
in view of a sharper inverse estimate the step-size condition can be
weakened to $\tau\le c_{{\rm m}} (1+|\log h|)^{-1}$ if $n_\O = 2$ and
$\tau \le c_{{\rm m}}$ if $n_\O=1$.

\begin{proof}
(i) To ensure the stability of the normalization the uniform bound 
$1/2 \le |u_h^{n-1}| \le 2$ is needed. We argue by induction and assume that~\eqref{stab_ineq}  
holds with $N$ replaced by $N'-1$. For $N'-1=0$ this is 
satisfied since by definition of $B_{h,\tau}$ we have $\|e_h^0\|_{H^1} \le B_{h,\tau}$. 
Then, the inverse estimate~\eqref{eq:inv_est_p} 
and the assumption on $B_{h,\tau}$ with $c_B$ small enough imply that
\[ 
\|e_h^{n-1}\|_{L^\infty}^2 \le c_{{\rm inv}}^2 h^{-1}\|e_h^{n-1}\|_{H^1}^2 
\le c _{{\rm inv}}^2h^{-1} c_{{\rm stab}} B_{h,\tau}^2 \le c_{{\rm inv}}^2 c_{{\rm stab}} c_B^2 \le \frac1{16},
\]
for all $n\le N'$. We then deduce with~\eqref{ritz_max_norm} that 
\[		
\| |u_h^{n-1}| -1 \|_{L^\infty} \le  \| u_h^{n-1} - u_*^{n-1}\|_{L^\infty} 
\le \| e_h^{n-1} \|_{L^\infty} + \|u_{*,h}^{n-1} - u_*^{n-1}\|_{L^\infty} \le \frac12
\]
for $h$ sufficiently small. \\
(ii) For $n\le N'$ we test the error equation~\eqref{error} by 
$\phi_h = P_h(\hu_h^n) d_t e_h^n \in T_h (\hu_h^n)$
and use Lemma~\ref{test_fn_corr}, which shows $\phi_h = d_t e_h^n - s_h^n - q_h^n$. 
This leads to 
\[\begin{split}
\|d_t e_h^n\|^2 + (\nabla e_h^n,\nabla d_t e_h^n) 
& = (d_t e_h^n, s_h^n + q_h^n) + (\nabla e_h^n, \nabla (s_h^n+q_h^n)) \\ 
& \quad - (r_h^n,d_t e_h^n - s_h^n - q_h^n).
\end{split}\]
With a binomial formula and H\"older and Young inequalites we deduce that
\begin{equation}\label{eq:err_1} 
\begin{split} 
& \|d_t e_h^n\|^2 + \frac{d_t}{2} \|\nabla e_h^n\|^2 + \frac{\tau}{2} \|\nabla d_t e_h^n\|^2 \\
& \le \frac12 \|d_t e_h^n\|^2 + 2 \|s_h^n + q_h^n\|^2 + 2 \|r_h^n\|^2 + \frac12 \|\nabla e_h^n\|^2
+ \frac12 \|\nabla (s_h^n + q_h^n)\|^2. 
\end{split}
\end{equation}
To obtain the full $H^1$-norm of $e_h^n$ on the left-hand side we note that
\[ 
\frac{d_t}2 \|e_h^n\|^2 + \frac\tau2 \|d_t e_h^n\|^2 = (e_h^n,d_t e_h^n)  
\le \frac12 \|e_h^n\|^2 + \frac12 \|d_t e_h^n\|^2.
\] 
Hence, by adding $(1/2) \|e_h^n\|^2$ to both sides of~\eqref{eq:err_1} we find
\[ 
 \frac{d_t}{2} \|e_h^n\|_{H^1}^2  
 \le  2 \|s_h^n + q_h^n\|^2 + 2 \|r_h^n\|^2 + \frac12 \| e_h^n\|_{H^1}^2
+ \frac12 \|\nabla (s_h^n + q_h^n)\|^2. 
\]
Multiplication by $2 \tau$ and summation over $n= 1,2,\dots,n'$ with $n'\le N'$
show that 
\[
\|e_h^{n'}\|_{H^1}^2 \le \|e_h^0\|_{H^1}^2 + \tau \sum_{n=1}^{n'} \| e_h^n\|_{H^1}^2 
+ 4 \tau \sum_{n=1}^{n'} \big(\|s_h^n + q_h^n\|_{H^1}^2 + \|r_h^n\|^2 \big). 
\]
Absorbing $\|e_h^{n'}\|_{H^1}^2$ for $\tau$ sufficiently small
and incorporating Lemmas~\ref{res_estimate} and~\ref{test_fn_corr} leads to 
\[ 
\|e_h^{n'}\|_{H^1}^2 \le c_1 \tau \sum_{n=1}^{n'-1}\|e_h^n\|_{H^1}^2  + c_2 B_{h,\tau}^2
\]
for all $n'=1,2,\dots,N'$. A discrete Gronwall inequality proves 
\[
\max_{n'=1,...,N'}\|e_h^{n'}\|_{H^1}^2 \le c_{{\rm stab}} B_{h,\tau}^2,
\]
with $c_{{\rm stab}} = \exp(c_1 \tau N) \le \exp(c_1 T)$ independently of $N'$. 
Hence,~\eqref{stab_ineq} holds with $N'$ and this completes the induction argument. 
\end{proof}


\medskip
\noindent
{\em Acknowledgments.}
The authors acknowledge support by the German Research Foundation (DFG) 
via research unit FOR 3013 {\em Vector- and tensor-valued surface PDEs}
(Grant no. BA2268/6–1). The work of Bal\'azs Kov\'acs is funded by the 
Heisenberg Programme of the DFG (Project-ID 446431602).

\section*{References}
\printbibliography[heading=none]

\end{document}